\numberwithin{equation}{section}
\newtheorem{lemma}{Lemma}[section]
\newtheorem{theorem}{Theorem}[section]
\newtheorem{remark}{Remark}[section]
\newlength{\defbaselineskip}
\newcommand{\setlinespacing}[1]%
           {\setlength{\baselineskip}{#1 \defbaselineskip}}
\newcommand{\doublespacing}{\setlength{\baselineskip}%
                           {1.5 \defbaselineskip}}
\newcommand{\CC}{{\mathbb C}}
\newcommand{\RR}{{\mathbb R}}
\newcommand{\deq}{\stackrel{\rm d}{=}}
\newcommand{\beql}[1]{\begin{equation}\label{#1}}
\newcommand{\eeq}{\end{equation}}
\newcommand{\beqal}[1]{\begin{eqnarray}\label{#1}}
\newcommand{\eeqa}{\end{eqnarray}}
\newcommand{\beq}{\begin{displaymath}}
\newcommand{\eeqno}{\end{displaymath}}
\newcommand{\bali}[1]{\begin{align}\label{#1}}
\newcommand{\eali}{\begin{align}}
\newcommand{\balino}{\begin{align*}}
\newcommand{\ealino}{\begin{align*}}
\newcommand{\ep}{\epsilon}
\newcommand{\Var}{\text{\rm Var}}
\newcommand{\Cov}{\text{\rm Cov}}
\newcommand{\qandq}{\quad\mbox{and}\quad}
\newcommand{\qforq}{\quad\mbox{for}\quad}
\newcommand{\qasq}{\quad\mbox{as}\quad}
\newcommand{\non}{\nonumber}
\newcommand{\baa}{\begin{eqnarray*}}
\newcommand{\eaa}{\end{eqnarray*}}
\newcommand{\ttl}{\Large Path Properties of a Generalized Fractional Brownian Motion}
\begin{document}

\title[]{\ttl}

\allowdisplaybreaks

\author{Tomoyuki Ichiba}
\address{Department of Statistics \& Applied Probability, University of California, Santa Barbara, CA 93106}
\email{ichiba@pstat.ucsb.edu}

\author{Guodong Pang}
\address{The Harold and Inge Marcus Department of Industrial and
Manufacturing Engineering,
College of Engineering,
Pennsylvania State University,
University Park, PA 16802}
\email{gup3@psu.edu}

\author{Murad S. Taqqu}
\address{Department of Mathematics and Statistics, Boston University, Boston, MA 02215}
\email{murad@math.bu.edu}

\date{\today}

\begin{abstract} 
\doublespacing
The generalized fractional Brownian motion is a Gaussian self-similar process whose increments are not necessarily stationary. It appears in applications  as the scaling limit of a  shot noise process with a power law shape function and non-stationary noises with a power-law variance function. 
 In this paper we study sample path properties of the generalized fractional Brownian motion, including H{\"o}lder continuity, path differentiability/non-differentiability, and functional and local Law of the Iterated Logarithms. 

\end{abstract}

\keywords{Gaussian self-similar process, non-stationary increments, generalized fractional Brownian motion, H{\"o}lder continuity, path differentiability/non-differentiability,  functional and local Law of the Iterated Logarithms}

\maketitle


\doublespacing

\allowdisplaybreaks

\section{Introduction}

We consider the generalized fractional Brownian motion (GFBM) $X:=\{X(t): t\in \RR_+\}$ defined via the following (time-domain) integral representation: 
\begin{equation} \label{def-X}
\{X(t)\}_{t\in \RR} \deq \left\{  c \int_\RR \left( (t-u)_+^{\alpha} - (-u)_+^{\alpha} \right)    |u|^{-\gamma/2} B( {\mathrm d} u)  \right\}_{t\in \RR},
\end{equation}
where
\begin{equation} \label{def-X-g-c}
 \gamma \in [0,1), \quad
\alpha \in \Big(-\frac{1}{2}+ \frac{\gamma}{2}, \ \frac{1}{2}+ \frac{\gamma}{2}\Big),
\end{equation}
and $c= c(\alpha, \gamma) \in \RR_+$ is the normalization constant.  
Here, $B( {\mathrm d} u)$ is a Gaussian random measure on $\RR$ with the Lebesgue control measure $ du$.
It is shown in Proposition 5.1 \cite{pang-taqqu} that the process $X$ is a continuous mean-zero Gaussian process with $X(0)=0$, and has the self-similarity property  
with Hurst parameter 
\begin{equation} \label{eq: Hurst para}
H=\alpha - \frac{\gamma}{2} + \frac{1}{2} \in (0,1).
\end{equation}

This process arises as the scaling limit of the so-called power-law non-stationary shot noise processes which have
the shot shape function of power-law with parameter $\alpha$ and the non-stationary noise distributions with a power-law variance function of parameter $\gamma$. This is established in \cite{pang-taqqu}. With i.i.d. (stationary) noises, the scaled power-law shot noise processes converge to the standard FBM, see, e.g., \cite[Chapter 3.4]{PT17} and \cite{kluppelberg2004fractional}. Note that the power-law in the shot shape function captures the long range dependence while the power-law in the non-stationary noises captures the dispersions of their variabilities, and thus does not contribute to the long range dependence. 

The GFBM $X$ in \eqref{def-X} is a natural generalization of the standard FBM, since it preserves the same long range dependence structure as FBM, while the power-law perturbation of the  (Brownian) Gaussian random measure not only introduces non-stationarity (in the increments) but also preserves the important self-similarity property. 

There are three parameters, $H$, $\alpha$  and $\gamma$ and two relations \eqref{def-X-g-c}-\eqref{eq: Hurst para}. 
 Eliminating $\alpha$ yields the following representation of the self-similar process X with Hurst parameter $H \in (0,1)$ and (scale/shift) parameter  $\gamma \in (0, 1) $: 
\begin{equation} \label{def-X-alternative}
\{X(t)\}_{t\in \RR} \deq \left\{  c \int_\RR \left( (t-u)_+^{H- \frac{1}{2} +\frac{ \gamma}{2} } - (-u)_+^{H- \frac{1}{2} +\frac{ \gamma}{2} } \right)    |u|^{-\gamma/2} B( {\mathrm d} u)  \right\}_{t\in \RR}.
\end{equation}
Evidently when $\gamma=0$, this becomes the standard FBM $B^H$:
\begin{equation} \label{FBM-rep}
\{B^H(t)\}_{t\in \RR} \deq \left\{  c \int_\RR \left( (t-u)_+^{H-\frac{1}{2}} - (-u)_+^{H-\frac{1}{2}} \right)   B( {\mathrm d} u)  \right\}_{t\in \RR}. 
\end{equation}

Although one may think of $  |u|^{-\gamma/2} $ as a time change of the Brownian motion 
 which introduces non-stationarity increments,  we observe from the representation in \eqref{def-X-alternative} that  for a given Hurst parameter value $H \in (0,1)$, the parameter $\gamma$ also shifts the exponents
in $ (t-u)_+^{H-\frac{1}{2}} - (-u)_+^{H-\frac{1}{2}}$ by the positive amount $\frac{\gamma}{2} \in (0,\frac{1}{2})$.  For instance, for $H=\frac{1}{4}$, the exponent in the FBM $B^H$ is $H-\frac{1}{2}=-\frac{1}{4}$, but with $\gamma = \frac{3}{4}$, that exponent in the process $X$ becomes $H- \frac{1}{2} +\frac{ \gamma}{2} =\frac{1}{8}$. For another instance, for   $H=\frac{3}{4}$, the exponent in the FBM $B^H$ is $H-\frac{1}{2}=\frac{1}{4}$, but with $\gamma = \frac{3}{4}$, the exponent in the process $X$ becomes $H- \frac{1}{2} +\frac{ \gamma}{2} =\frac{5}{8}$.
 We see that the positive shift in the exponent makes the function $(t-u)_+^{H- \frac{1}{2} +\frac{ \gamma}{2} } - (-u)_+^{H- \frac{1}{2} +\frac{ \gamma}{2} }$ smoother than $ (t-u)_+^{H-\frac{1}{2}} - (-u)_+^{H-\frac{1}{2}}$. On the other hand, the function $  |u|^{-\gamma/2} $ has the opposite effect, making the paths ``rougher".  It is then interesting to ask how the  parameter $\gamma$ affects the path properties of the GFBM.

 To answer this question, we focus on the  sample path properties of the GFBM $X$, H{\"o}lder continuity,  path differentiability/non-differentiability, and functional and local Law of Iterated Logarithms (FLIL and LLIL, respectively). 
In Theorem \ref{thm-Holder}, we prove that the paths of the process $X$ are H{\"o}lder continuous with parameter $H-\epsilon$ for $\epsilon>0$.  
In Theorems \ref{thm-FLIL}, \ref{thm-LLIL} and \ref{thm-LLIL-composition}, we prove the functional  Law of Iterated Logarithm (FLIL) and  LLIL as well as an LLIL for the composition of the GFBM $X$ with itself, which again, depends only on the Hurst parameter $H$. 
These are somewhat surprising results, indicating that the nice path properties of H{\"o}lder continuity, FLIL and LLIL  are preserved by the construction of the GFBM $X$ in \eqref{def-X} and \eqref{def-X-alternative},  { and are not being affected by the parameter $\gamma$}.

On the other hand, the differentiability of the paths of the GFBM $X$ is affected by the parameter $\gamma$.  It is well known that the FBM $B^H$ is non-differentiable for $H\in (0,1)$. 
In Theorem \ref{thm-nondiff}, we show that if the parameters $(\alpha,\gamma)$ are in the region $\{ \alpha \in (1/2,1/2+\gamma/2), \gamma \in (0,1)\}$, leading to $H\in (1/2,1)$, the paths of $X$ are differentiable, while in the rest of parameter ranges, the paths of $X$ are non-differentiable. It is interesting to observe that for $H\in (1/2,1)$, there are distinct path differentiability properties in the two regions distinguished by $\alpha>1/2$ (differentiable) and $\alpha\le 1/2$ (non-differentiable). In addition, we show that when $\alpha>1/2$, the paths of the GFBM $X$ is once continuously differentiable but not twice (with probability one), and we derive the first-order derivative.  These results are distinct from the non-differentiability property of the FBM $B^H$.

It is worth mentioning that all these properties of the standard FBM $B^H$ rely critically on the stationary increment property, i.e., the familiar elegant covariance function and the second moment of its increment (see \eqref{eq: FBM cov.f} and \eqref{eq: FBM 2ndM}). The proofs of these properties are relatively straightforward, and have become standard textbook materials \cite{PT17}.  However, for the GFBM $X$ in \eqref{def-X}, non-stationary increments result in a rather complicated covariance function (see \eqref{eqn-Psi}). 
For the proof of the H{\"o}lder continuity, we provide a useful decomposition of the increment of the GFBM $X$, and then evaluate their increments separately. This decomposition may turn out to be useful in other purposes. 
For the other properties, we draw upon some important results that were established for general Gaussian processes (some with self-similarity properties), for example, the (non)differentiability property by Yeh  
\cite{yeh1967differentiability}, FLIL by Taqqu \cite{taqqu1977law}, and  local LIL and compositions of certain Gaussian processes with itself by Arcones \cite{arcones1995law}.  For the GFBM $X$ in \eqref{def-X}, due to its non-stationary increment property and the particular structure of its covariance function, it is challenging to verify some of the technical conditions imposed in these results.  The proofs of the non-differentiability and FLIL rely critically upon the H{\"o}lder continuity property we establish.

We also remark that the GFBM  $X$ in \eqref{def-X} is different from the so-called Brownian semi-stationary  (BSS)  processes introduced by Barndoff-Nielsen and Schmiegel  \cite{barndorff2009brownian}, which was used to study volatility/intermittency inference problems in financial markets. The process was introduced to circumvent the non-semimartingale issues on the inference problems concerning the underlying volatility process based on realized quadratic variation (see the multi-power variation for BSS process in \cite{barndorff2011multipower}). However, their assumptions on the spot intermittency process exclude functions of the type $|u|^{-\gamma/2}$ as we assume (see, for example, equation (4.7) in \cite{barndorff2011multipower}). 

FBMs have been recently used to study ``rough" volatility \cite{gatheral2018volatility,bayer2016pricing,livieri2018rough}. On the other hand, non-stationary increments have been well recognized in various financial data, see, e.g., \cite{bassler2007nonstationary,mccauley2008martingales}.  The GFBM $X$ in \eqref{def-X} and the path properties studied in this paper may be useful in the study of ``rough" volatility. 

We start in the next section with some preliminary results on basic properties of the GFBM $X$. The H{\"o}lder continuity, differentiability/non-differentiability, FLIL and LLIL results are stated and proved in Sections \ref{sec-Holder}, \ref{sec-nondiff}, \ref{sec-FLIL} and \ref{sec-LLIL}, respectively.

\section{Some preliminaries}

A striking distinction from the standard FBM is the non-stationary increment property. 
Recall that  the standard FBM $B^H$ with the Hurst index $H$ has the covariance function: for $s, t\in \RR_+$, 
 \begin{equation} \label{eq: FBM cov.f}
 E\bigl[B^H(s) B^H(t)\bigr] = \frac{1}{2}c^2 (t^{2H} + s^{2H} - |t-s|^{2H}),
 \end{equation} 
 and the second moment of its increment:
 \begin{equation} \label{eq: FBM 2ndM}
 E\bigl[(B^H(s) - B^H(t))^2\bigr] = c^2 |t-s|^{2H}. 
 \end{equation}
 This stationary increment property plays the fundamental role in proving many properties of FBM and the associated processes, for example, stochastic integrals with respect to FBM. 

For the GFBM $X$ in (\ref{def-X}) the covariance function $\Psi  $ between $X(s) $ and $X(t)$ and the second moment function $\Phi $ of its increment $X(s) - X(t) $ are given, respectively,  by 
\begin{align} \label{eqn-Psi}
\Psi(s,t) & := \Cov(X(s), X(t)) = E[X(s)X(t)] \non\\
&= c^2 \int_\RR \Big(  \left( (t-u)_+^{\alpha} - (-u)_+^{\alpha} \right)  \left( (s-u)_+^{\alpha} - (-u)_+^{\alpha} \right) \Big)  |u|^{-\gamma} {\rm d}u, \non\\
&= c^2 \int_0^s (t-u)^{\alpha} (s-u)^{\alpha} u^{-\gamma} {\rm d}u 
+ c^2 \int_0^{\infty} ((t+u)^{\alpha} - u^{\alpha}) ((s+u)^{\alpha} - u^{\alpha})u^{-\gamma}{\rm d}u, 
\end{align}
\begin{align} \label{eqn-Phi}
\Phi(s,t)&:=E\bigl[(X(s) - X(t))^2\bigr]  = c^2 \int_\RR \Big(  (t-u)_+^{\alpha} - (s-u)_+^{\alpha}  \Big)^2  |u|^{-\gamma} {\rm d}u \non\\
&= c^2  \int_s^t (t-u)^{2\alpha} u^{-\gamma} {\rm d}u 
+ c^2\int_0^s ( (t-u)^{\alpha} - (s-u)^{\alpha} )^2 u^{-\gamma} {\rm d}u \non\\
&\qquad  + c^{2}\int_0^\infty ((t+u)^{\alpha} -(s+u)^\alpha)^2 u^{-\gamma} {\rm d} u, \quad 0\le s \le t.  
\end{align}

When $\gamma =0$, the GFBM $X$ in (\ref{def-X}) becomes the standard FBM with the covariance function (\ref{eq: FBM cov.f}) and stationary second moments (\ref{eq: FBM 2ndM}) of increments.  

For standard FBM $B^H$, we usually distinguish two cases: $H \in (0, \frac{1}{2} )$ and $H \in ( \frac{1}{2}, 1)$, which corresponds to the exponents in \eqref{FBM-rep} being negative and positive, respectively. 
However, for the GFBM $X$, we distinguish the following two cases: 
$$
H \in \left(0, \frac{1-\gamma}{2} \right) \qandq H \in \left(\frac{1-\gamma}{2},1 \right), \qforq \gamma \in [0,1),
$$
which correspond to the exponents in \eqref{def-X-alternative} being negative or positive. Note that $\gamma$ can be very close 1, in which case { the interval $ \big(\frac{1-\gamma}{2},1 \big) $ in the second scenario 
 becomes very close to $(0,1)$, the whole range of the Hurst parameter $H$. }
The two cases can be also written in terms of $\alpha$ and $\gamma$: 
$$
\alpha \in \left(-\frac{1-\gamma}{2}, 0 \right) \qandq \alpha \in \left(0,  \frac{1+\gamma}{2} \right),  \qforq \gamma \in [0,1). 
$$

\begin{remark}[The role of $\gamma$] \label{rem: 2.2}
{\em 
We highlight the following on the role of the parameter $\gamma$:
\begin{enumerate}
\item[(i)] When $\gamma \in (0, 1)$, the increment is not second-order stationary,  {that is, $\Phi(s,t)$ is not a function of $|s-t|$.} 
\item[(ii)]  $\Var(c^{-1}X(t)) = c^{-2} \Psi(t,t) = t^{2H}$ is decreasing in $\gamma$ and increasing $\alpha$ (where $c = c(\alpha, \gamma) \in \RR_+$ is the normalization parameter in Lemma \ref{lem-const}). 
\item[(iii)] Flexibility for Hurst parameter $H$: Figure~\ref{fig-alpha-gamma} illustrates the range of $\alpha$ and $\gamma$ for the Hurst parameter $H \in (0,1)$. The middle dotted line corresponds to the value $H=0.5$ (including the special cases $(\alpha=0, \gamma =0)$ and $(\alpha \approx 0.5, \gamma \approx 1)$), instead of the single value $H=0.5$ in the case of BM; see further discussions in Remark~\ref{rem-H=0.5}. 
For $\alpha\approx 0$ and $\gamma\approx 1$, the Hurst parameter $H$ can be arbitrarily close to zero, while for $\alpha \approx -0.5$ and $\gamma \approx 0$ (which is close to the FBM case), the same is also true.
\item[(iv)] Roughness of paths: H{\"o}lder continuity, FLIL and LLIL hold with the Hurst parameter $H$ as for the standard FBM $B^H$. However, the GFBM $X$ is differentiable when $\alpha \in (1/2, 1/2+\gamma/2)$ and $\gamma \in (0,1)$ (resulting $H \in (1/2,1)$), while it is non-differentiable when  $\alpha \in (-1/2+\gamma/2, 1/2]$ and $\gamma \in (0,1)$ (resulting $H\in (0,1)$).
 \end{enumerate}} 
\end{remark}

\begin{center}
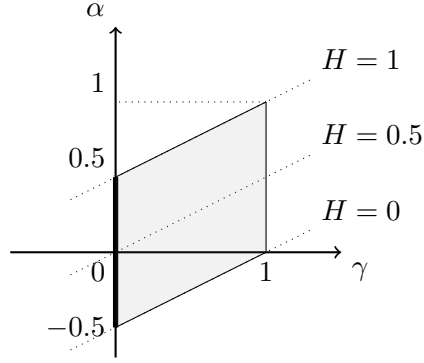
\begin{figure} \label{fig-alpha-gamma}
\begin{tikzpicture}[scale=2]
\draw[thick,->] (-0.7, 0.0) -- (1.5, 0.0) node[anchor=north west] {$\gamma$}; 
\draw[thick,->] (0,-0.7) -- (0, 1.5) node[anchor=south east] {$\alpha$}; 
\draw[] (0,0) node[anchor = north east] {$0$};
\draw[] (0,-0.5) -- (1, 0) node[below] {$1$};
\draw[] (1, 1) -- (0,0.5) node[anchor = south east]{0.5};
\draw[] (0, -0.5) node[anchor=east] {$-0.5$}; 
\draw[] (1,0) -- (1,1) node[]{};
\draw[dotted] (1,1) -- (0, 1) node[anchor = south east] {$1$};
\draw[dotted] (-0.3,0.35) -- (1.3, 1.15) node[above right] {$H = 1 $}; 
\draw[dotted] (-0.3,-0.65) -- (1.3, 0.15) node[above right] {$H = 0 $}; 
\draw[dotted] (-0.3,-.15) -- (1.3, 0.65) node[above right] {$H = 0.5 $}; 
\draw[line width = 0.7mm] (0, -0.5) -- (0, 0.5); 
\draw[fill=gray, opacity=0.1] ((0,0.5) -- (1,1) -- (1,0) -- (0,-0.5) -- (0,0.5) -- cycle; 
\end{tikzpicture}
\caption{The set of parameters $\,(\gamma, \alpha)\,$ given in (\ref{def-X-g-c}) is shown in the shaded area. The boundary points are not included in  (\ref{def-X-g-c}). The dotted lines corresponding to the Hurst parameters $\,H = 0, 0.5, 1\,$ are plotted, respectively.  The thick line segment corresponds to the FBM with $H \in (0, 1) $ and $\gamma = 0 $. In the neighborhood	of the point $\,\gamma = 1\,$, $\, \alpha \, =\,  0 \,$, the Hurst index is close to $\,0\,$.}
\end{figure}
\end{center}

\begin{remark} \label{rem-H=0.5}
{\em 
{
Although the FBM $B^H$ becomes a standard BM when $H=1/2$, this is not the case for the GFBM $X$.
The values of $\alpha$ and $\gamma$ corresponding to the case $H=1/2$ lie in the line $\alpha- \frac{\gamma}{2} =0$ for $\gamma \in [0,1)$. The GFBM $X$ only becomes a standard BM in the special case $\gamma =0$. This is   due to the fact that the process $X$ does not have stationary increments if $\gamma>0$. 
For $\gamma  \in (0,1)$ and $\alpha =  \frac{\gamma}{2}$, the process $X$ provides an example of a $H$-self-similar Gaussian process with Hurst parameter $H=1/2$ which is not a  BM. 
We remark that there are $H$-self-similar processes  with $H=1/2$, that may not be Gaussian, see \cite{bai2014generalized}. It is clear that when $H=1/2$ and $ 2 \alpha = \gamma > 0 $,  the process $X$  is not a martingale with respect to the filtration $\mathcal F^{B}(s) := { \sigma} \{B(u), u \le s \} $, $s \in \mathbb R$ generated by the BM $B$,  because for every $s < t$, 
\begin{equation*}
\begin{split}
E [ X(t) - X(s) \, \vert \, \mathcal F^{B}(s) ] \, &=\, E \Big[ \int_{\mathbb R}  ((t-u)^{\alpha}_{+} - (s-u)^{\alpha}_{+} ) \lvert u \rvert^{-\alpha} B({\mathrm d}u) \, \Big \vert \, \mathcal F^{B}(s)  \Big]  \\
&= \, \int^{s}_{-\infty} ((t-u)^{\alpha} - (s-u)^{\alpha} ) \lvert u \rvert^{-\alpha} B({\mathrm d}u) \neq 0 . 
\end{split}
\end{equation*}
It is worth mentioning the work on ``fake" Brownian motions  constructed from martingales in \cite{fan2015mimicking,hobson2016mimicking}.
}
}
\end{remark}

\begin{remark}
{\em
In  \cite[Sections 5.1 and 5.2]{pang-taqqu}, generalized FBMs are stated in a more general form with the additional terms involving $(t-u)_{-}^\alpha - (-u)_{-}^\alpha$ in the integrands. This can be treated similarly with additional terms, and so we focus on the representations of $X$ in \eqref{def-X}. } 
\end{remark}

\subsection{The normalization constant $c=c(\alpha,\gamma)$}
{
With the increments  
\begin{equation} \label{eqn-wt-B}
 \widetilde{B}(t) \, :=\,  B(- t) - B(0) \,, \quad t \ge 0,
\end{equation}
 independent of the increments $\, B(t) - B(0)  \,$, $\,t \ge 0 \,$,} we obtain 
\begin{equation} \label{X-rep-1}
\begin{split}
c^{-1} X(t) \, =&\,  \int_{\mathbb R} \big( ( t-u)_{+}^{\alpha} - ( - u )_{+}^{\alpha} \big) \lvert u\rvert^{-\gamma/2 } B({\mathrm d} u ) \\
\, =&\,  \int^{t}_{0} ( t-u)^{\alpha} \lvert u \rvert^{-\gamma/2} B({\mathrm d} u) + \int^{\infty}_{0} [  ( t+v)^{\alpha} v^{-\gamma/2}- v^{\alpha-\gamma/2}] \widetilde{B}({\mathrm d} v) \\
\, \stackrel{(d)}{=}&\, \bigg( \int^{1}_{0} ( 1-v)^{\alpha} v^{-\gamma/2} B({\mathrm d} v) + \int^{\infty}_{0} [  ( 1+v)^{\alpha} v^{-\gamma/2} - v^{\alpha-\gamma/2}] \widetilde{B}({\mathrm d} v) \bigg) \cdot t^{\alpha - \frac{\,\gamma\,}{\,2\,} + \frac{\,1\,}{\,2\,}} \, , 
\end{split}
\end{equation}
where the last equality is a distributional identity from the scaling property of Brownian motion, and $ \stackrel{(d)}{=}$ denotes ``equal in distribution". Thus, one can express the constant $c$ in terms of Beta and Gamma functions as follows.

\begin{lemma} \label{lem-const}
With  \begin{equation} \label{eqn-c-def}
c= c(\alpha, \gamma):=\kappa(\alpha, \gamma)^{-1/2},
\end{equation} 
and
\begin{equation} \label{eq: kappa1}
 \kappa(\alpha, \gamma) \, :=\,  \text{\rm Beta} ( 1 - \gamma, 2 \alpha + 1 ) +  \Big( \frac{\,\Gamma ( 1 - \gamma) \,}{\,\Gamma (-2\alpha) \,}  - \frac{\,2 \Gamma ( 1+ \alpha  - \gamma) \,}{\,\Gamma (-\alpha) \,} \Big) \Gamma ( - 1 - 2\alpha + \gamma),  
\end{equation}
the GFBM $X(t)$ in \eqref{def-X} is then normalized:
\begin{equation} \label{eq:VarXt}
\Var(X(t)) = t^{2\alpha-\gamma+1} = t^{2H}, \quad t\ge 0.
\end{equation} 
\end{lemma}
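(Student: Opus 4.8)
The plan is to extract $\Var(X(t))$ directly from the self-similar representation \eqref{X-rep-1} and thereby reduce the identification of $c$ to the evaluation of two deterministic integrals. From the last line of \eqref{X-rep-1}, for fixed $t$ the random variable $c^{-1}X(t)$ equals, in distribution, $t^{\alpha-\gamma/2+1/2}$ times the sum of two Wiener integrals: one against $B(\D v)$ on $[0,1]$, the other against $\widetilde B(\D v)$. Since $\widetilde B$ is built from the increments of $B$ on the negative half-line, which \eqref{eqn-wt-B} declares independent of $B$ on $[0,\infty)$, these two integrals are independent mean-zero Gaussians. Hence by the Wiener--Itô isometry, pulling the factor $v^{-\gamma/2}$ out of each square,
\[
\Var\big(c^{-1}X(t)\big)=\kappa(\alpha,\gamma)\,t^{2\alpha-\gamma+1},\qquad \kappa(\alpha,\gamma)=\int_0^1(1-v)^{2\alpha}v^{-\gamma}\,\D v+\int_0^\infty\big[(1+v)^\alpha-v^\alpha\big]^2 v^{-\gamma}\,\D v.
\]
Because $2\alpha-\gamma+1=2H$ and $c^2=\kappa(\alpha,\gamma)^{-1}$, the identity $\Var(X(t))=t^{2H}$ follows the moment $\kappa(\alpha,\gamma)$ is shown to equal \eqref{eq: kappa1}.

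The first integral is immediate: with $x=1-\gamma$ and $y=2\alpha+1$ in $\text{Beta}(x,y)=\int_0^1 v^{x-1}(1-v)^{y-1}\,\D v$ it equals $\text{Beta}(1-\gamma,2\alpha+1)$, the first summand of \eqref{eq: kappa1} (convergence at $v=0$ uses $\gamma<1$). The genuine work is $J:=\int_0^\infty[(1+v)^\alpha-v^\alpha]^2 v^{-\gamma}\,\D v$, which converges exactly on the range \eqref{def-X-g-c} (at $\infty$ one uses $(1+v)^\alpha-v^\alpha\sim\alpha v^{\alpha-1}$, requiring $\alpha<(1+\gamma)/2$; at $0$ the admissible lower bound $\alpha>(\gamma-1)/2$ suffices). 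I would evaluate $J$ via the substitution $v=w/(1-w)$, under which $1+v=(1-w)^{-1}$, $\D v=(1-w)^{-2}\,\D w$, and $(1+v)^\alpha-v^\alpha=(1-w)^{-\alpha}(1-w^\alpha)$, giving
\[
J=\int_0^1 w^{-\gamma}(1-w)^{\gamma-2\alpha-2}\big(1-w^\alpha\big)^2\,\D w.
\]
Expanding $(1-w^\alpha)^2=1-2w^\alpha+w^{2\alpha}$ splits $J$ formally into the three Beta integrals $\text{Beta}(1-\gamma,\gamma-2\alpha-1)$, $\text{Beta}(1+\alpha-\gamma,\gamma-2\alpha-1)$ and $\text{Beta}(1+2\alpha-\gamma,\gamma-2\alpha-1)$, with coefficients $1,-2,1$. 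Writing $\text{Beta}(x,y)=\Gamma(x)\Gamma(y)/\Gamma(x+y)$ and noting that the first two have $x+y=-2\alpha$ and $x+y=-\alpha$, they produce precisely $\tfrac{\Gamma(1-\gamma)}{\Gamma(-2\alpha)}\Gamma(-1-2\alpha+\gamma)$ and $-\tfrac{2\Gamma(1+\alpha-\gamma)}{\Gamma(-\alpha)}\Gamma(-1-2\alpha+\gamma)$, matching \eqref{eq: kappa1}.

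The main obstacle is rigor in this last step. The third piece, $\text{Beta}(1+2\alpha-\gamma,\gamma-2\alpha-1)$, has arguments summing to $0$ and so converges at neither endpoint for any admissible $(\alpha,\gamma)$ — in the original variable it is the scaleless integral $\int_0^\infty v^{2\alpha-\gamma}\,\D v$. Thus the three summands are never simultaneously convergent and the termwise split is not literally valid; this is exactly the cancellation of the $(1+v)^{2\alpha}$ and $v^{2\alpha}$ tails. The device I would use is analytic continuation of the Beta function: each piece is assigned its meromorphic value $\Gamma(x)\Gamma(y)/\Gamma(x+y)$, and since $\Gamma(x+y)=\Gamma(0)=\infty$ for the third piece its continued value is $0$. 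To justify that $J$ equals this continued sum, I would introduce independent regulators, working with $J_{a,b}=\int_0^\infty[(1+v)^a-v^a][(1+v)^b-v^b]v^{-\gamma}\,\D v$ (or, equivalently, integrating by parts twice in the $w$-integral to raise the singular exponent $\gamma-2\alpha-2$ above $-1$), evaluate the now-convergent Beta integrals in a region of joint analyticity, and continue back to $a=b=\alpha$; the scaleless contribution carries the factor $\Gamma(0)^{-1}=0$ throughout. Adding the two summands gives $\kappa(\alpha,\gamma)$ as in \eqref{eq: kappa1}, and with $c=\kappa(\alpha,\gamma)^{-1/2}$ we conclude $\Var(X(t))=t^{2H}$.
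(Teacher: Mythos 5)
Your probabilistic reduction is exactly the paper's: it too starts from the self-similar representation \eqref{X-rep-1}, uses the independence of $B$ and $\widetilde B$ together with the It\^o isometry to get $\Var(c^{-1}X(t)) = \kappa\, t^{2\alpha-\gamma+1}$ with $\kappa$ equal to the sum of the two integrals you wrote. The difference lies in identifying $\kappa$ with the closed form \eqref{eq: kappa1}: the paper's proof ends with the statement that ``the last equality can be verified by Mathematica,'' whereas you actually derive it. Your substitution $v=w/(1-w)$ and the formal expansion of $(1-w^\alpha)^2$ into three Beta values with coefficients $1,-2,1$ reproduce \eqref{eq: kappa1} exactly (the third, scaleless piece being assigned the value $0$ because of the factor $1/\Gamma(0)=0$), so your argument supplies precisely the step the paper delegates to a computer algebra system, which is a genuine gain.

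However, the rigorization you sketch has a concrete defect. The two-parameter regulator $J_{a,b}$ cannot produce ``now-convergent Beta integrals in a region of joint analyticity'': after expanding, the last piece is $\int_0^\infty v^{a+b-\gamma}\,{\mathrm d} v$, which is scaleless for \emph{every} $(a,b)$ --- in the $w$-variable it requires $a+b>\gamma-1$ for convergence at $w=0$ but $a+b<\gamma-1$ at $w=1$ --- so no choice of regulators makes all pieces converge simultaneously, and the continuation argument never gets off the ground. (The integration-by-parts variant also fails as stated: the boundary term at $w=0$ is of order $w^{-\gamma}$ and diverges.) The repair is to regulate the $(1-w)$-exponent instead: set
\[
J(s) \,:=\, \int_0^1 w^{-\gamma}\,(1-w)^{s}\,\big(1-w^\alpha\big)^2\,{\mathrm d} w .
\]
For $\Re s > -1$ the termwise expansion is legitimate and yields three genuinely convergent Beta integrals,
\[
J(s)\,=\,\frac{\Gamma(1-\gamma)\Gamma(s+1)}{\Gamma(2-\gamma+s)}-\frac{2\,\Gamma(1+\alpha-\gamma)\Gamma(s+1)}{\Gamma(2+\alpha-\gamma+s)}+\frac{\Gamma(1+2\alpha-\gamma)\Gamma(s+1)}{\Gamma(2+2\alpha-\gamma+s)} ,
\]
since the $w=0$ exponents $1-\gamma$, $1+\alpha-\gamma$, $1+2\alpha-\gamma$ are all positive on the range \eqref{def-X-g-c}. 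The left-hand side is analytic on $\Re s>-3$ (because $(1-w^\alpha)^2\asymp(1-w)^2$ near $w=1$), the right-hand side is meromorphic, and the two agree on $\Re s>-1$; hence by uniqueness of analytic continuation they agree at $s=\gamma-2\alpha-2\in(-3,-1)$, where the third term vanishes and the first two give exactly \eqref{eq: kappa1}. Finally, note the point $\gamma=2\alpha$ (i.e.\ $H=1/2$): there $\Gamma(-1-2\alpha+\gamma)=\Gamma(-1)$ is a pole, but the bracket in \eqref{eq: kappa1} vanishes (since $\Gamma(1-2\alpha)/\Gamma(-2\alpha)=-2\alpha$ and $\Gamma(1-\alpha)/\Gamma(-\alpha)=-\alpha$), so the formula must be read there as a limit; away from that line your derivation, so repaired, is complete.
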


\begin{proof}
By the distributional identity \eqref{X-rep-1} and the It\^o isometry of stochastic integral, we obtain 
\begin{equation} \label{eq: varcX}
\begin{split}
&\text{Var} ( c^{-1} X(t)) \, =\, \,  \mathbb E [ c^{-2} X^{2}(t) ] \\
 \, &= \, \,  t^{2\alpha - \gamma+ 1} \cdot \bigg(  \int^{1}_{0} ( 1-v)^{2\alpha } v^{-\gamma} {\mathrm d} v + \int^{\infty}_{0} [  ( 1+v)^{\alpha} v^{-\gamma/2} - v^{\alpha-\gamma/2}] ^{2} {\mathrm d} v \bigg) \, =\, \kappa \,  t^{2H}  ,  
\end{split}
\end{equation}
where the last equality can be verified by Mathematica.  
\end{proof}

\begin{remark}[Integrability] \label{rem: 2.2}
{\em
In \eqref{eq: varcX} the indefinite integral over the infinite interval $(0, \infty) $ appears. Its integrability is verified under the condition \eqref{def-X-g-c} by direct calculation.  
 As discussed above, we consider the two cases  $\, 0 < \alpha <  (1+\gamma)/2\,$ and $\, - (1-\gamma)/ 2 < \alpha < 0 \,$ for a gamma $\gamma \in (0,1)$. 
 
Indeed, if 
$\, 0 < \alpha <  (1+\gamma)/2\,$, then $\,u \mapsto (1+u)^{\alpha} - u^{\alpha}  \,$ is a  decreasing function in $(0, \infty)$ 
and by Taylor expansion,  $\,(1+u)^{\alpha} - u^{\alpha}\, \le \, \alpha u^{\alpha-1}\,$ for every $\, u > 0 \,$, and hence, the indefinite integral in  \eqref{eq: varcX} is integrable: 
\begin{equation} \label{eq:integrability0}
\begin{split}
& \int^{\infty}_{0} [ (1 + u)^{\alpha} - u^{\alpha}]^{2} u^{-\gamma}{\mathrm d} u \, =\,  \Big(\int^{1}_{0} + \int^{\infty}_{1}\Big) [ (1 + u)^{\alpha} - u^{\alpha}]^{2} u^{-\gamma}{\mathrm d} u  \, \\
& \le \int^{1}_{0} u^{-\gamma} {\mathrm d} u + \int^{\infty}_{1} \alpha^{2} u^{2(\alpha - 1) - \gamma} {\mathrm d} u \, =\,  \frac{1}{\, 1 - \gamma \, } + \frac{\,\alpha^{2}\,}{\,1 + \gamma - 2 \alpha \,}\, =:\, c_{1} < \infty . \,  
\end{split}
\end{equation}
Similarly, if $\, - (1-\gamma)/ 2 < \alpha < 0 \,$, then set $\, \widetilde{\alpha} := - \alpha \in (0, 1/2) \,$, $ \widetilde{\gamma} :=  - 2 \alpha + \gamma \in (0, 1)  $. Rewriting the indefinite integral in \eqref{eq: varcX} in terms of $ \widetilde{\alpha}$ and $ \widetilde{\gamma}$, we obtain  
\begin{equation} \label{eq:integrability1} 
\begin{split}
& \int^{\infty}_{0} [ (1 + u)^{\alpha} - u^{\alpha}]^{2} u^{-\gamma}{\mathrm d} u \, =\,  \int^{\infty}_{0} \frac{\,[ (1 + u)^{-\alpha} - u^{-\alpha}]^{2} \,}{\,u^{-2\alpha} ( 1 + u)^{-2\alpha} \,}  u^{-\gamma}{\mathrm d} u \, \\
& \le \, \int^{\infty}_{0} {\,[ (1 + u)^{ \widetilde{\alpha}} - u^{ \widetilde{\alpha}}]^{2} \,}
u^{- \widetilde{\gamma}}{\mathrm d} u \le \frac{1}{\, 1 - \widetilde{\gamma}\, } + \frac{\, \widetilde{\alpha}^{2}\,}{\,1 + \widetilde{\gamma} - 2 \widetilde{\alpha}\,} < \infty \, ,  
\end{split}
\end{equation}
where we used \eqref{eq:integrability0} with $\alpha, \gamma $ being replaced by $ \widetilde{\alpha}, \widetilde{\gamma}$ in the second inequality. If $\, \alpha \, =\,  0 \,$, the indefinite integral in \eqref{eq: varcX} is $0$. Thus, under the condition \eqref{def-X-g-c}, the GFBM $X$ is well defined. } 
\end{remark}

For the standard FBM $B^H$, we have
$\Var(c^{-1}B^H(t)) =   t^{2H}$, which is increasing in $H$ for each $t>0$, and is also increasing in $t$ for each $H$. 
It is clear that  the same properties hold for the process $X$.  In addition, we observe that $\Var(X(t))$ is decreasing in $\gamma$ for each $t>0$.

\begin{remark}[Standard FBM]
{\em  By the recursion formula of the gamma function, when $\gamma = 0 $ and $\alpha \in (-1/2, 1/2)$, it is the standard FBM $B^H$ and the constant $\kappa $ in \eqref{eq: kappa1} is reduced to $\kappa = \Gamma ( 1 + \alpha ) \Gamma ( 1 - 2 \alpha) / [(1+2\alpha ) \cdot \Gamma ( 1 - \alpha ) ] $. } 
\end{remark}

\subsection{Generalized Riemann-Liouville (R-L) FBM}  \label{sec-G-RL-FBM}
A special model is the generalized Riemann-Liouville (R-L)  FBM, introduced in Remark 5.1 in \cite{pang-taqqu}. It is defined by 
\begin{equation}
\label{eqn-GFBM-RL-X}
X(t) = c \int_0^t (t-u)^\alpha u^{-\gamma/2} B( {\mathrm d} u), \quad  t\ge 0,
\end{equation}
where $B( {\mathrm d} u)$ is a Gaussian random measure on $\RR$ with the Lebesgue control measure $ du$ and
$$
c \in \RR, \quad  \gamma \in [0,1), \quad
\alpha \in \Big(-\frac{1}{2}+ \frac{\gamma}{2}, \ \frac{1}{2}+ \frac{\gamma}{2}\Big).
$$
Such a process  is also  a continuous self-similar Gaussian process with Hurst parameter $H = \alpha - \frac{\gamma}{2}+ \frac{1}{2} \in (0,1)$.  Equivalently, given the Hurst parameter $H \in (0,1)$ and a parameter $\gamma \in [0,1)$, the process $X$ in \eqref{eqn-GFBM-RL-X} can be represented as 
\begin{equation}
\label{eqn-GFBM-RL-X-alternative}
X(t) = c \int_0^t (t-u)^{H- \frac{1}{2} +\frac{\gamma}{2} } u^{-\frac{\gamma}{2}} B( {\mathrm d} u), \quad  t\ge 0. 
\end{equation}
When $\gamma=0$, this becomes the standard R-L FBM:
\begin{equation}
\label{eqn-GFBM-RL-X-alternative}
B^H(t) = c \int_0^t (t-u)^{H- \frac{1}{2}  }  B( {\mathrm d} u), \quad  t\ge 0. 
\end{equation}
This process was introduced by L\'evy (\cite{Levy}, see also Chapter 6 in \cite{PT17};  modulo some constant scaling). 
{
When $H=1/2$, i.e., $\alpha =\frac{\gamma}{2}$ for $\gamma \in [0,1)$, 
\begin{equation} \label{eqn-X-GRL-H=1/2}
X(t) = c\int_0^t (t/u -1)^{\gamma/2} B( {\mathrm d}u) = c \int^{t}_{0} ( t / u -1)^{\alpha} B( {\mathrm d} u ),  \quad t \ge 0. 
\end{equation}
If $\gamma=0$, then $X(t) =c B(t)$ is a Brownian motion, but if $\gamma \, =\, 2 \alpha \, \in (0,1)$, the increment can be rewritten as 
\[
X(t) - X(s) \, =\,  c \int^{s}_{0} ((t-u)^{\alpha} - (s-u)^{\alpha}) u^{-\alpha} B({\mathrm d} u) + \int^{t}_{s} ( t - u)^{\alpha} u^{-\alpha} B ({\mathrm d} u ) \, , \quad 0 \le s < t \, ,  
\]
 indicating the non-stationarity of its increments. 
}

The process $X$ in \eqref{eqn-GFBM-RL-X} has the covariance function
\begin{align}
\Psi(s,t) = E[X(s)X(t)] = c^2 \int_0^s (s-u)^{\alpha} (t-u)^{\alpha} u^{-\gamma} {\rm d}u, 
\end{align}
and the second moment of its increment
\begin{align} \label{Phi-GFBM-RL}
\Phi(s,t)  = E[(X(t) - X(s))^2] &= c^2 \int_s^t (t-u)^{2\alpha} u^{-\gamma} {\rm d}u + c^2 \int_0^s |(t-u)^{\alpha} - (s-u)^{\alpha} |^2 u^{-\gamma} {\rm d}u, 
\end{align}
for $0 \le s\le t$. 

We  also have the variance
\begin{align*}
\Var(c^{-1}X(t)) &= \int_0^t (t-u)^{2\alpha} u^{-\gamma} {\rm d}u \\
&= t^{2\alpha - \gamma +1} \int_0^1 (1-v)^{2\alpha} v^{-\gamma} {\rm d}v =  t^{2\alpha - \gamma +1}\text{Beta} (1-\gamma, 2\alpha+1). 
\end{align*}
Thus, with normalization constant $c=c(\alpha, \gamma) = \big(\text{Beta} (1-\gamma, 2\alpha+1)\big)^{-1/2}$, we have 
$\Var(X(t)) =  t^{2\alpha-\gamma+1} = t^{2H}$ for $t\ge 0$. 
It is clear that with this normalization factor, we have $\Var(X(t))$ increasing in $\alpha$ and decreasing in $\gamma$  for each fixed $t>0$.

\section{H{\"o}lder continuity} \label{sec-Holder}

In this section we prove the H{\"o}lder continuity property of the GFBM $X$ in \eqref{def-X}.  For convenience, we assume from now on that the GFBM X is
normalized with $c=c(\alpha, \gamma)$ given in \eqref{eqn-c-def}. 


Recall that for FBM $B^H$ with Hurst parameter $H \in (0,1)$, by self-similarity, we have 
$$
E\big[ |B^H(t) - B^H(s)|^p \big] = |t-s|^{pH} E\big[ |B_H(1)|^p \big]
$$
for any $p>0$.  Then the H{\"o}lder continuity property follows from applying the Kolmogorov-Centsov continuity criterion. Namely, the FBM $B^H$ admits a version whose sample paths are almost surely H{\"o}lder continuous of order strictly less than $H$.

Due to the lack of stationary increments, the proof of  the H{\"o}lder continuity property of the GFBM $X$ requires a delicate study of the second moment of the increment.

\begin{theorem}\label{thm-Holder}
For every $\, T > 0 \,$, there exists a positive constant $\,C_{T}\,$ such that the covariance function $\,\Phi \,$ in \eqref{eqn-Phi} satisfies $\, \Phi(s, t)  \le C_{T} \lvert s  - t \rvert^{2H}  \,$ for $ 0 < s < t \le T\,$, and hence,  by Kolmogorov-Centsov continuity criterion, the sample path $\, t \mapsto X(t) \,$ of the Gaussian process $\, X \,$ in \eqref{def-X} is almost surely $\,\alpha_{0}\,$-H\"older continuous for $\, 0 \le t \le T\,$ with $\, 0 < \alpha_{0} < H = (2 \alpha - \gamma+1)/2\,$. 
\end{theorem}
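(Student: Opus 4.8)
The plan is to reduce the whole statement to the single deterministic inequality $\Phi(s,t)\le C_T|t-s|^{2H}$ asserted in the theorem, and then invoke Gaussianity together with Kolmogorov--Centsov. Since $X(t)-X(s)$ is a centered Gaussian variable with variance $\Phi(s,t)$, for every $p>0$ one has $E\big[|X(t)-X(s)|^p\big]=c_p\,\Phi(s,t)^{p/2}$ with $c_p=E[|N|^p]$, $N\sim N(0,1)$. Granting the bound on $\Phi$, this gives $E\big[|X(t)-X(s)|^p\big]\le c_p C_T^{p/2}|t-s|^{pH}$ for $0<s<t\le T$. Choosing $p>1/H$ makes the exponent satisfy $pH>1$, so the Kolmogorov--Centsov criterion yields a modification that is $\alpha_0$-H\"older continuous on $[0,T]$ for every $\alpha_0<(pH-1)/p=H-1/p$; letting $p\to\infty$ covers all $\alpha_0<H$. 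Thus everything rests on the estimate for $\Phi$.

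To bound $\Phi$ I would use the three-term splitting already exhibited in \eqref{eqn-Phi}, writing $\Phi(s,t)=c^2(I_1+I_2+I_3)$ with
\[
I_1=\int_s^t (t-u)^{2\alpha}u^{-\gamma}\,du,\qquad I_2=\int_0^s\bigl((t-u)^\alpha-(s-u)^\alpha\bigr)^2 u^{-\gamma}\,du,\qquad I_3=\int_0^\infty\bigl((t+u)^\alpha-(s+u)^\alpha\bigr)^2 u^{-\gamma}\,du,
\]
and estimating each separately, writing $h:=t-s$. The first term is immediate: the substitution $u=s+hw$, $w\in(0,1)$, gives $I_1=h^{2\alpha+1}\int_0^1(1-w)^{2\alpha}(s+hw)^{-\gamma}\,dw\le h^{2\alpha+1-\gamma}\int_0^1(1-w)^{2\alpha}w^{-\gamma}\,dw=\mathrm{Beta}(1-\gamma,2\alpha+1)\,h^{2H}$, where I used $s+hw\ge hw$ and the convergence of the Beta integral guaranteed by $2\alpha+1>0$ and $1-\gamma>0$ from \eqref{def-X-g-c}.

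For $I_2$ and $I_3$ I would first normalize the difference of powers by the change of variables $a=s-u$ in $I_2$, so that the integrand becomes $\bigl((h+a)^\alpha-a^\alpha\bigr)^2(s-a)^{-\gamma}$ on $a\in(0,s)$, keeping $I_3$ as is with base $u$. In both integrals I would split the domain at the scale $h$: on the ``far'' part, where the relevant base variable exceeds $h$, the mean value theorem gives $|(x+h)^\alpha-x^\alpha|\le|\alpha|\,h\,x^{\alpha-1}$ (valid since $\alpha<1$), reducing the contribution to elementary power integrals such as $\int x^{2\alpha-2-\gamma}\,dx$ that converge and evaluate to a multiple of $h^{2H}$ precisely because $2\alpha<1+\gamma$; on the ``near'' part, where the base is at most $h$, I would instead use the crude bounds $|(x+h)^\alpha-x^\alpha|\le h^\alpha$ when $0<\alpha<1$ and $|(x+h)^\alpha-x^\alpha|\le x^\alpha$ when $\alpha<0$, whose squares integrate against $u^{-\gamma}$ to a multiple of $h^{2H}$ thanks to $2\alpha-\gamma>-1$. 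The sign of $\alpha$, equivalently the two regimes $H\lessgtr(1-\gamma)/2$ from the preliminaries, thus dictates which elementary inequality is used on the low-base region.

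The main obstacle is $I_2$ near the weight singularity $u\to 0$ (that is, $a\to s$) in the regime where the increment $h$ is comparable to or larger than $s$. Applying the mean value theorem blindly there produces a contribution of order $h^2 s^{2\alpha-1-\gamma}$, and since $2\alpha-1-\gamma<0$ this is $O(h^{2H})$ only when $s\gtrsim h$; when $s\lesssim h$ it overshoots. Resolving this forces the additional case distinction $s\ge h$ versus $s<h$: for $s\ge h$ the mean value bound is used throughout, and the factor $(s/h)^{2\alpha-1-\gamma}\le 1$ saves the estimate; whereas for $s<h$ one discards the mean value theorem near the singularity and bounds $\bigl((h+a)^\alpha-a^\alpha\bigr)^2$ directly, integrating $(s-a)^{-\gamma}$ over $(0,s)$ to a Beta factor times $s^{2H}\le h^{2H}$. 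Carrying this bookkeeping through all sub-regions, and absorbing any remaining powers of $s\le T$ into the constant, yields $\Phi(s,t)\le C_T|t-s|^{2H}$ and completes the argument.
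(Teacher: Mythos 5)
Your proposal is correct, and its outer shell — centered Gaussian increments give $E[|X(t)-X(s)|^p]=c_p\Phi(s,t)^{p/2}$, then Kolmogorov--Centsov with $p\to\infty$ to reach every exponent $\alpha_0<H$ — is exactly the paper's; the three-term splitting of $\Phi$ from \eqref{eqn-Phi} and the treatment of $I_1$ (the paper's $\mathcal C_2$, bounded by $\mathrm{Beta}(1-\gamma,2\alpha+1)\,h^{2H}$) are also identical. Where you genuinely diverge is on the two hard terms $I_2$ and $I_3$ (the paper's $\mathcal C_1$ and $\mathcal C_3$). The paper factors out $(t-s)^{2H}$ \emph{exactly} via the substitutions $u=s-(t-s)v$, $v=xw$ with $x=s/(t-s)$, and then bounds the remaining dimensionless integral uniformly in $x$ using the pointwise inequality $\sup_{y>0}\big((1+y)^{\alpha}-y^{\alpha}\big)^2y^{1-\gamma}\le 4$ for $\alpha>0$, handling $\alpha<0$ by rewriting the integrand in terms of $\widetilde\alpha=-\alpha$, $\widetilde\gamma=-2\alpha+\gamma$; this yields clean absolute constants such as $4\,\mathrm{Beta}(1-\gamma,\gamma)$. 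You instead split the domain at the increment scale $h=t-s$, use the mean value theorem on the far region and the crude bounds $|(x+h)^{\alpha}-x^{\alpha}|\le h^{\alpha}$ ($\alpha>0$) or $\le x^{\alpha}$ ($\alpha<0$) on the near region, and you correctly isolate the one delicate spot — the weight singularity $u\to 0$ in $I_2$ when $h\gtrsim s$ — resolving it with the $s\ge h$ versus $s<h$ dichotomy ($h^2s^{2H-2}\le h^{2H}$ in the first case, a Beta integral times $s^{2H}\le h^{2H}$ in the second). I checked the sub-cases, including the borderline regime $s$ comparable to $h$ and the range $\alpha\le 1/2$ where $\int_0 a^{2\alpha-2}\,\mathrm{d}a$ diverges (so the MVT bound must indeed be confined to $a>h$, as your split ensures): they all close, and the resulting constants are in fact absolute, so the $T$-dependence you allow yourself is not even needed. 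As for what each route buys: the paper's scaling-plus-supremum argument is shorter and gives explicit constants once one finds the right sup inequality, but it leans on exact self-similar scaling of the kernel; your argument is more elementary and more robust — standard Volterra-kernel bookkeeping that would survive perturbations of the kernel for which the exact factorization of $(t-s)^{2H}$ is unavailable.
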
 

\begin{proof} When $\, \gamma \, =\,  0 \,$, it is the case of FBM with Hurst index $\, \alpha + 1/2 \,   \in (1/2, 1)\,$, and the result of H\"older continuity is known. Thus, let us consider the case with $c = 1$ and $\gamma \neq 0 $.  
  First, let us decompose $\, X(t)- X(s) \,$ from \eqref{X-rep-1} into three independent components $\mathcal C_{1}, \mathcal C_{2}, \mathcal C_{3}$: 
\begin{equation}\label{eq: 4}
\begin{split} 
X(t)-X(s) & \, =\,  \int_{\mathbb R} \big( ( t-u)_{+}^{\alpha} - ( - u )_{+}^{\alpha} \big) \lvert u\rvert^{-\gamma/2 } B({\mathrm d} u ) -  \int_{\mathbb R}\big( ( s-u)_{+}^{\alpha} - ( - u )_{+}^{\alpha} \big) \lvert u\rvert^{-\gamma/2 } B({\mathrm d} u )\\
 &\, =:\,  \mathcal C_{1} +   \mathcal  C_{2} +  \mathcal C_{3} \, , 
 \end{split} 
 \end{equation}
 where 
 \begin{equation}
 \begin{split}
 \mathcal C_{1} &\, :=\,\int^{s}_{0} [( t-u)^{\alpha} - (s-u)^{\alpha} ] u^{-\gamma/2} B({\mathrm d}u) \, , \\
 \mathcal C_{2} &\, := \, \int^{t}_{s} ( t-u)^{\alpha} \lvert u \rvert^{-\gamma/2} B({\mathrm d} u)\,, \\ \quad  \mathcal  C_{3}  &\, := \,   \int_{-\infty}^0 \big( ( t-u)_{+}^{\alpha} - ( - u )_{+}^{\alpha} \big) \lvert u\rvert^{-\gamma/2 } B({\mathrm d} u ) \\
 & \,\,\,=\, \int^{\infty}_{0} [  ( t+u)^{\alpha} - (s+u)^{\alpha}]  u^{-\gamma/2} \widetilde{B}({\mathrm d} u) \,  
\end{split}
\end{equation}
with $\widetilde{B}$ being given in \eqref{eqn-wt-B}.  
Thus, we have $$\,\Phi (s, t) \, =\,  E \big[ (X(t)-X(s))^{2}\big] \, =\,   E \big[ \mathcal C_{1}^{2} + \mathcal C_{2}^{2} + \mathcal C_{3}^{2}\big] \,,$$ 
and hence, we shall evaluate squared expectation of these three terms separately.  
{It is worth noting that all the three summands are basically of the same order (all are less than or equal to $C|t-s|^{2H}$ for some $C>0$). }

\medskip

\noindent $\,\bullet \,$ {\bf (Evaluation of $\,  \mathcal  C_{1}\,$).} 
By the change of variables with $\, u \, =\,  s - (t-s)v \,$ and $\,v = x w \,$, $\,x = s/ (t-s)\,$ for every $s, t $ with $ 0 < s < t \le T$, we have 
\begin{equation} \label{eq:C1-1}
\begin{split}
E [ {\mathcal C}_{1}^{2} ] \, &=\, \int^{s}_{0} \lvert ( t - u)^{\alpha} - (s- u)^{\alpha} \rvert^{2} u^{-\gamma} {\mathrm d} u \\
& =  \int^{s/(t-s)}_{0} (t-s)^{2\alpha + 1 - \gamma} \Big( \frac{\,s\,}{\,t-s\,} - v\Big)^{-\gamma} ( ( 1 + v)^{\alpha} - v^{\alpha})^{2} {\mathrm d} v \\
&= (t-s)^{2H} \Big( \int^{1}_{0} ( 1 - w)^{-\gamma} (( 1 + x w)^{\alpha}- (xw)^{\alpha})^{2} x^{1-\gamma}  \,  {\mathrm d} w \Big) \Big \vert_{ x = s/(t-s) } \,. 
\end{split}
\end{equation}
{
When $\, 0 < \alpha < (1+\gamma)/2\,$, we have 
\begin{equation} \label{eq:C1-2}
\begin{split}
& \int^{1}_{0} ( 1 - w)^{-\gamma} (( 1 + x w)^{\alpha}- (xw)^{\alpha})^{2} x^{1-\gamma}  \,  {\mathrm d} w \\
 = & \int^{1}_{0} ( 1 - w)^{-\gamma} w^{\gamma-1} (( 1 + x w)^{\alpha}- (xw)^{\alpha})^{2} (xw)^{1-\gamma}  \,  {\mathrm d} w \\
\le & \, \sup_{y > 0 } \big\{ ( (1 +y)^{\alpha} - y^{\alpha})^{2} y^{1-\gamma}  \big\} \cdot \int^{1}_{0} ( 1 - w)^{-\gamma} w^{\gamma-1} {\mathrm d} w  \\
 \le & 4 \,  \text{Beta} ( 1 - \gamma , \gamma) < \infty \, ,  
\end{split}
\end{equation}
where we used the inequality 
\begin{equation} \label{eq: C1-2-1}
\, \sup_{y > 0 } ( (1 +y)^{\alpha} - y^{\alpha})^{2} y^{1-\gamma} \le \max \Big\{  \sup_{y \ge 1 } ( (1 +y)^{\alpha} - y^{\alpha})^{2} y^{1-\gamma} ,  \sup_{0 < y \le 1 } ( (1 +y)^{\alpha} - y^{\alpha})^{2} y^{1-\gamma} \Big\}  \le 4 \, .   
\end{equation} 
To verify \eqref{eq: C1-2-1}, firstly we use $\, (1 + y)^{\alpha} - y^{\alpha} \le \alpha \,  y^{\alpha - 1} \,$, $\, y > 0 \,$, $\, \alpha > 0 \,$ to obtain 
$$\,( (1 +y)^{\alpha} - y^{\alpha})^{2} y^{1-\gamma}  \le \alpha^{2} y^{2(H-1)} \le \alpha^{2} < 1\, \quad \text{for} \quad \, y \ge 1 \,,$$
 and secondly, we evaluate 
 $$\,( (1 +y)^{\alpha} - y^{\alpha})^{2} y^{1-\gamma}  \le (1 +y)^{2\alpha} y^{1-\gamma} \le 4^{\alpha} \le 4  \, \quad \text{for} \quad  \, 0 < y \le 1\,,$$ and then combine the inequalities. 

Similarly, when $\, -(1-\gamma)/2 < \alpha < 0   \,$, considering $ \widetilde{\alpha} = - \alpha $, $ \widetilde{\gamma} = - 2 \alpha + \gamma < 1 $, as we derived in (\ref{eq:integrability1}), and also using a similar inequality to \eqref{eq: C1-2-1} (but now with $\,\widetilde{\alpha}\,$ and $\,\widetilde{\gamma}\,$, instead of $\, \alpha\,$, $\, \gamma\,$), we obtain the upper bound for every $\,x = s/ (t-s) > 0 \,$, 
\begin{align}\label{eq:C1-3}
& \int^{1}_{0} ( 1 - w)^{-\gamma} (( 1 + xw)^{\alpha} - (xw)^{\alpha})^{2} x^{1 - \gamma} {\mathrm d} w \nonumber\\
& =\,  \int^{1}_{0} ( 1 - w)^{-\gamma} \frac{(( 1 + xw)^{ \widetilde{\alpha}} - (xw)^{ \widetilde{\alpha}})^{2}}{( 1 + xw)^{2 \widetilde{\alpha}} (xw)^{2 \widetilde{\alpha}} } x^{1 - \gamma} {\mathrm d} w \nonumber\\
& \le \int^{1}_{0} ( 1 - w)^{-\gamma}w^{-2 \widetilde{\alpha} } {(( 1 + xw)^{ \widetilde{\alpha}} - (xw)^{ \widetilde{\alpha}})^{2}} x^{1 - \widetilde{ \gamma} } {\mathrm d} w \nonumber\\
& = \int^{1}_{0} ( 1 - w)^{-\gamma}w^{\gamma -1} {(( 1 + xw)^{ \widetilde{\alpha}} - (xw)^{ \widetilde{\alpha}})^{2}} (xw)^{1 - \widetilde{ \gamma} } {\mathrm d} w \nonumber\\
& \le\sup_{y> 0 } \big\{ ((1+y)^{ \widetilde{\alpha}} - y^{ \widetilde{\alpha}})^{2} y^{1- \widetilde{\gamma}} \big\} \cdot \int^{1}_{0} ( 1 - w)^{- \gamma}w^{\gamma-1 }  {\mathrm d} w \, \nonumber\\
& \le \, 4 \, \text{Beta} ( 1-\gamma, \gamma ) < \infty  \, . 
\end{align}
} 
Thus, combining (\ref{eq:C1-2})--(\ref{eq:C1-3}) with (\ref{eq:C1-1}), we claim that there exists a positive constant $\,c_{3}\,$ such that 
\begin{equation}
\label{eq:C1-4}  
 E [ {\mathcal C}_{1}^{2} ] \le c_{3} (t-s)^{2H}\,. 
\end{equation}

\medskip 

\noindent $\, \bullet \,$ {\bf (Evaluation of $\, \mathcal  C_{2}\,$).} Similarly, for the second term in (\ref{eq: 4}), by the change of variables with $\,u \, =\,  (t-s)v + s\,$ and $\, H\, =\, 2\alpha - \gamma + 1 \,$, we obtain for $\, 0 \le s < t < \infty\,$, 
\begin{equation} \label{eq:C2}
\begin{split}
 E [  \mathcal  C_{2}^{2} ] & \,=\,\int^{t}_{s} ( t- u)^{2\alpha } u ^{-\gamma} {\mathrm d} u \, \le\,  ( t- s)^{2\alpha - \gamma +1} \int^{1}_{0} (1-v)^{2\alpha} v^{-\gamma} {\mathrm d} v \, =\,  c_{4} \lvert t - s\rvert^{2H} \, , 
\end{split}
\end{equation}
where $\, c_{4} \, :=\, \text{Beta} ( 1 + 2\alpha, 1 - \gamma) \,$. This holds for every $\, \alpha > -1/2\,$.

\medskip 

\noindent $\, \bullet \,$ {\bf (Evaluation of $\, \mathcal  C_{3}\,$).} For the third term $\,  {\mathcal C}_{3}\,$ in (\ref{eq: 4}), when $\,0 < \alpha < (1+\gamma)/2\,$, because of  (\ref{eq:integrability0}) in Remark \ref{rem: 2.2}, we have with the constant $\, c_{1}\,$ in (\ref{eq:integrability0}), for $\, s < t \,$, with $\,x \, :=\, s / (t-s) > 0 \,$, 
\begin{align} \label{ineq: bdd int1}
& \int^{\infty}_{x}  ( (1+v)^{\alpha} - v^{\alpha} ) ^{2} ( v - x )^{-\gamma} {\mathrm d} v  \nonumber \\
& \le \int^{\infty}_{x}  (( 1+v - x ) ^{\alpha} - ( v - x) ^{\alpha}) ^{2} ( v - x)^{-\gamma} {\mathrm d} v  \nonumber \\
& 
\le \int^{\infty}_{0} ( (1 + u)^{\alpha} - u^{\alpha})^{2} u^{-\gamma}{\mathrm d} u \, \le \,  c_{1} . 
\end{align}
{
Similarly, when $\, - (1- \gamma)/2 < \alpha < 0 \,$, again with $\, \widetilde{\alpha} \, =\, - \alpha\,$ and $\, \widetilde{\gamma} \, =\, - 2 \alpha + \gamma < 1\,$, for every $s, t $ with $0 < s < t $ and $\,x := s / (t-s) > 0 \,$, we have 
\begin{align} \label{ineq: bdd int2}
& \int^{\infty}_{x}  ((1+v)^{\alpha} - v^{\alpha} ) ^{2} ( v - x )^{-\gamma} {\mathrm d} v \nonumber \\
& = \int^{\infty}_{x}  \frac{ (1+v)^{ \widetilde{\alpha}} - v^{ \widetilde{\alpha}} ) ^{2}}{( 1+v)^{2 \widetilde{\alpha}} v^{ -2 {\alpha}}} \cdot  ( v - x )^{-\gamma} {\mathrm d} v \non \\
& \le \int^{\infty}_{x}  {( (1+v)^{ \widetilde{\alpha}} - v^{ \widetilde{\alpha}} ) ^{2}}  \cdot ( v - x)^{2\alpha -\gamma} {\mathrm d} v \nonumber  \\
& \le \int^{\infty}_{0} ( ( 1 + u)^{ \widetilde{\alpha}} - u ^{ \widetilde{\alpha}})^{2} u^{- \widetilde{\gamma}} {\mathrm d} u < \infty \, ,  
\end{align}
where we used \eqref{eq:integrability1} in the last part of inequalities.  
}

Then for $ 0 < s < t $, by changing the variables 
with $\, u = (t-s)v - s \,$ 
and then using \eqref{ineq: bdd int1}-(\ref{ineq: bdd int2}) separately, we claim that there exists a positive constant $\,c_{5}\,$ such that 
\begin{equation} \label{eq: inf integral0-H}
\begin{split}
E \big[ \mathcal C_{3}^{2}\big] \, &=\, \int^{\infty}_{0} [ (t + u)^{\alpha} -  (s+u)^{\alpha}] ^{2} u^{-\gamma}  {\mathrm d} u \\
& =\,  
(t-s)^{2H} \int^{\infty}_{s/(t-s)}  [ (1+v)^{\alpha} - v^{\alpha} ] ^{2} \Big( v - \frac{\,s\,}{\,t-s\,} \Big)^{-\gamma} {\mathrm d} v\\
& \le c_{5} (t-s)^{2H}.
\end{split}
\end{equation}

Combining these inequalities (\ref{eq:C1-4}), (\ref{eq:C2}), (\ref{eq: inf integral0-H}) with (\ref{eq: 4}), we obtain the desired inequality,  because the second moments $E [ \lvert X(t)\rvert^{2}] $ are finite as it is given in (\ref{eq:VarXt}) and {for $\, 0 < s <  t < T$,    }
\begin{equation}\, 
\Phi(t, s) \, \le\, (c_{3} + c_{4} + c_{5}) \cdot  \lvert t - s \rvert^{2\alpha - \gamma + 1} \, . 
\end{equation} 

Since $\, X \,$ is a zero-mean Gaussian process, $\, X(t) - X(s) \,$ is a Gaussian random variable with mean $\, 0 \,$ and variance $\, \Phi (s, t)\,$, and hence, its $\,2p\,$-th moment ($\, p \ge 1 \, $) can be evaluated by 
\[
E [ \lvert X (t) - X(s) \rvert^{2p}] \le c_{p} [\Phi (t, s) ]^{p} \le c_{p} C^{p} \lvert t - s \rvert^{(2\alpha - \gamma +1)p} \, 
\]
for some positive constant $\,c_{p}\,$ which depends on $\,p\,$. Then applying the Kolmogorov-Centsov continuity criterion (e.g., Theorem 1.2.1 of \textsc{Revuz \& Yor} (1991)), we conclude that the sample paths of  the GFBM $\, X\,$ in \eqref{def-X} is $\,\alpha_{0}\,$-H\"older continuous on every finite interval $\,[0, T]\,$ with probability one for $\, 0 < \alpha_{0} < H= (2 \alpha - \gamma+1)/2\,$. 
\end{proof}

\begin{remark}
{\em
When $\gamma$ is close to $1$ and $\alpha > 0$, the Hurst parameter $H$ can be chosen with $H < 1/2$. Thus, Theorem \ref{thm-Holder} covers the whole range of $\, H \in (0, 1)\,$.  

For the generalized R-L FBM $X$ in \eqref{eqn-GFBM-RL-X}, the same H{\"o}lder continuity property holds. }
\end{remark}

\begin{remark} 
{\em 
Consider   $\, X(t) \, =\,  \int^{t}_{0} \kappa(t, u ) B({\mathrm d} u) \,$, $\, t \ge 0 \,$ with a Volterra kernel $\,\kappa (t,u) \, =\,  (t-u)^{\alpha} u^{-\gamma/2}  \,$.
 This process or similar processes have been recently studied by Yazigi (2015) \cite{yazigi2015representation}.
By Theorem 2.1 in \cite{yazigi2015representation}, the Volterra kernel $\kappa$ can be written as
\[
k(t, u) \, =\,  t^{H - 1/2} F(u/t) \, ; \quad t \ge  0 \, , 0 \le u \le t , 
\]
where $H=\alpha - \gamma/2+1/2$ is the Hurst parameter, and $F(v) = (1-v)^\alpha v^{-\gamma/2}$ and $F(v)\equiv 0$ for $v>1$ (it is clear that $F \in L^2(\RR_+, du)$, i.e., $\int_{\RR} |F(u)|^2 du < \infty$.) 
In the related work \cite{azmoodeh2014necessary}, necessary and sufficient conditions are derived for H\"older continuity of such self-similar processes. 
The condition involves the function $\Phi(s,t)$, and is closely related to the Fernique's theorem on the continuity of Gaussian processes. 
By Theorem 1 in \cite{azmoodeh2014necessary}, we obtain that there exist constant $c_\epsilon$ such that the function $\Phi(s,t)$ in \eqref{Phi-GFBM-RL} satisfies
$$
\Phi(s,t)^{1/2}  \le c_{\epsilon} |t-s|^{H-\epsilon}, \quad \text{for all} \quad \epsilon>0. 
$$

On the other hand, the proof of  Theorem 1 in \cite{azmoodeh2014necessary}
 uses the Garsia-Rademich-Rumsey inequality (see Lemma 2 in \cite{azmoodeh2014necessary}), which unfortunately only holds on the finite time interval $[0,T]$.  We are unable to prove the H{\"o}lder continuity property with that approach for the GFBM $X(t)$ in \eqref{def-X}. 
}
\end{remark}

\begin{remark}
{\em 
For standard FBM $B^H$, it is shown in 
Theorem 1.6.1 in \cite{biagini2008stochastic}
that the local  Law of Iterated Logarithm holds:
\begin{equation} \label{eqn-FBM-LLIL1}
\limsup_{t\to 0^+} \frac{|B^H(t)|}{t^H\sqrt{\log\log t^{-1}}} = c_H
\end{equation}
with probability one for some appropriate constant $c_H>0$.
 This implies that $B^H$ cannot have sample paths with H{\"o}lder continuity of order greater than $H = \alpha + 1/2$. 
 For the GFBM $X$ in \eqref{def-X}, we establish the local Law of Iterated Logarithm in Section \ref{sec-LLIL}, see Theorem \ref{thm-LLIL}. That result will imply that 
 the process $X$ cannot have sample paths with H{\"o}lder continuity of order greater than $H= \alpha - \gamma/2 + 1/2$. } 
 
\end{remark}

\section{Path Differentiability} \label{sec-nondiff}

We prove the following differentiability/non-differentiability property of the sample paths of $X$. For FBM $B^H$ with $\,H \in (0, 1)\,$, the path non-differentiability property was established in \cite{mandelbrot1968fractional}; see also \cite[Proposition~1.7.1]{biagini2008stochastic}.   
The proof of  \eqref{eqn-diff-1} for FBM $B^H$ uses its self-similarity and stationary-increment properties, in particular, for $t>s$, the law of the ratio  ${(B^H(t) - B^H(s))}\, /\, {(t-s)}$ is the same as $(t-s)^{H-1} B^H(1)$, and the probability of $\{|B^H(1)|>a t_n^{1-H}\}$ converges to zero where $a>0$ is any positive constant and $\{t_n\}$ is any sequence decreasing to zero as $n\to\infty$. 
Distinct from the standard FBM $B^H$, the GFBM $X$ has a region of parameters $\,1/2<\alpha < 1/2+\gamma/2\,$ and $\gamma \in (0,1)$, which gives $H\in (1/2,1)$, in which its paths are differentiable, while in the rest of the parameter regions of $(\alpha,\gamma)$, resulting $H \in (0,1)$, its paths are non-differentiable. Recalling Remark \ref{rem-H=0.5}, in the case of $H=1/2$, we remark that the paths of $X$ are non-differentiable regardless of whether  $X$ is a Brownian motion ($\gamma=0$) or not ($\gamma \in (0,1)$).

\begin{theorem} \label{thm-nondiff}
The following differentiability properties hold:
\begin{itemize}
\item[(i)]
If $\,-1/2+\gamma/2<\alpha \le 1/2\,$ and $\gamma \in (0,1)$ (resulting in $H\in (0,1)$), the GFBM $X$ in \eqref{def-X} is not mean square differentiable and does not have differentiable sample paths; In particular, for every $s\in \RR_+$, 
\begin{equation} \label{eqn-diff-1}
\limsup_{t\to s} \left| \frac{X(t) - X(s)}{t-s}\right| = + \infty
\end{equation}
with probability one. 
\item[(ii)]
If $\,1/2<\alpha < 1/2+\gamma/2\,$ and $\gamma \in (0,1)$ (resulting in $H\in (1/2,1)$), the sample path of $X$ in \eqref{def-X} is continuously differentiable once with derivative 
\begin{equation} \label{eqn-diff-2}
\frac{\,{\mathrm d} X (t) \,}{\,{\mathrm d} t \,} \, =\, c  \int^{t}_{-\infty} \alpha ( t - s)^{\alpha - 1} \lvert s \rvert^{-\gamma / 2} {\mathrm d} B(s) \,, \quad t \ge 0 \,, 
\end{equation}
but not twice with probability one. 
\end{itemize}
\end{theorem}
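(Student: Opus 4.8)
The plan is to reduce both parts to second-moment estimates of increments, supplemented by a Gaussian Borel--Cantelli argument for the almost-sure statements and by a stochastic Fubini identity for the differentiable regime. Throughout I use the decomposition $X(t)-X(s)=\mathcal C_1+\mathcal C_2+\mathcal C_3$ from \eqref{eq: 4} and the bound $\Phi(s,t)\le C_T\lvert t-s\rvert^{2H}$ of Theorem \ref{thm-Holder}.

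For part (i) I would show that the normalized increment $(X(t)-X(s))/(t-s)$, a centered Gaussian variable with variance $\Phi(s,t)/(t-s)^2$, has variance tending to $+\infty$ as $t\downarrow s$. At $s=0$ this is immediate from self-similarity: $\Var(X(t)/t)=t^{2H-2}\to\infty$ since $H<1$. For $s>0$ and $-1/2+\gamma/2<\alpha<1/2$ I would retain only the middle term: since $u^{-\gamma}\ge(2s)^{-\gamma}$ on $[s,t]$ for $t\le 2s$, one has $\Phi(s,t)\ge E[\mathcal C_2^2]=\int_s^t(t-u)^{2\alpha}u^{-\gamma}\,{\mathrm d}u\ge c(s)(t-s)^{2\alpha+1}$, so that $\Phi(s,t)/(t-s)^2\ge c(s)(t-s)^{2\alpha-1}\to\infty$. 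The borderline $\alpha=1/2$ is the one case where $E[\mathcal C_2^2]$ is only of order $(t-s)^2$; here I would instead extract a logarithmic divergence from $E[\mathcal C_1^2]=\int_0^s[(t-u)^{1/2}-(s-u)^{1/2}]^2u^{-\gamma}\,{\mathrm d}u\asymp(t-s)^2\log(1/(t-s))$, which again forces the ratio to infinity. In all cases divergence of the variance gives mean-square non-differentiability at once; to upgrade to the almost-sure statement \eqref{eqn-diff-1} I would fix a geometric sequence $t_n\downarrow s$, note $\sigma_n^2:=\Var((X(t_n)-X(s))/(t_n-s))\to\infty$, and pass to a subsequence $(t_{n_k})$ with $P(\lvert X(t_{n_k})-X(s)\rvert/(t_{n_k}-s)\le k)=P(\lvert N(0,1)\rvert\le k/\sigma_{n_k})<2^{-k}$; Borel--Cantelli then yields $\lvert X(t_{n_k})-X(s)\rvert/(t_{n_k}-s)\to\infty$ almost surely, hence the limsup is $+\infty$.

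For part (ii) I would first verify that the candidate $Y(t):=c\alpha\int_{-\infty}^t(t-s)^{\alpha-1}\lvert s\rvert^{-\gamma/2}\,{\mathrm d}B(s)$ from \eqref{eqn-diff-2} is a well-defined centered Gaussian process for $t>0$: the exponent $\alpha-1\in(-1/2,0)$ makes $(t-s)^{2(\alpha-1)}$ integrable near $s=t$, $\lvert s\rvert^{-\gamma}$ is integrable near $0$, and at $-\infty$ the squared integrand behaves like $\lvert s\rvert^{2\alpha-2-\gamma}$, integrable precisely when $\alpha<1/2+\gamma/2$. A self-similarity computation gives $\Var(Y(t))=C\,t^{2(H-1)}$, finite for each $t>0$ but blowing up as $t\downarrow 0$, consistent with the non-differentiability at the origin from part (i). Estimating $E[(Y(t)-Y(s))^2]$ by the same three-term split as in Theorem \ref{thm-Holder} but with exponent $\alpha-1$, one finds all three pieces of order $(t-s)^{2\alpha-1}$ on any $[\epsilon,T]$ with $\epsilon>0$; since $\alpha-1/2>0$, Kolmogorov--Centsov yields a continuous version of $Y$ on $(0,\infty)$. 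The key step is then the differentiation-under-the-integral identity: applying a stochastic Fubini theorem (justified because $\int_s^t E[(Y(r)/c)^2]\,{\mathrm d}r<\infty$) together with $\int_s^t\alpha(r-u)_+^{\alpha-1}\,{\mathrm d}r=(t-u)_+^\alpha-(s-u)_+^\alpha$ gives $\int_s^t Y(r)\,{\mathrm d}r=X(t)-X(s)$ for $0<s<t$. Since $Y$ is continuous on $(0,\infty)$, the fundamental theorem of calculus shows $X$ is continuously differentiable there with $X'=Y$.

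Finally, to show $X$ is not twice differentiable I would apply the mechanism of part (i) to $Y=X'$: the middle term in the split of $Y(t)-Y(s)$ gives $E[(Y(t)-Y(s))^2]\ge c(s)\int_s^t(t-u)^{2\alpha-2}u^{-\gamma}\,{\mathrm d}u\ge c'(s)(t-s)^{2\alpha-1}$ (using $2\alpha-1>0$), so that $E[(Y(t)-Y(s))^2]/(t-s)^2\ge c'(s)(t-s)^{2\alpha-3}\to\infty$ because $\alpha<3/2$; the Gaussian Borel--Cantelli argument then shows $\limsup_{t\to s}\lvert Y(t)-Y(s)\rvert/\lvert t-s\rvert=+\infty$ almost surely, i.e.\ $X''$ does not exist. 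I expect the main obstacle to be the rigorous justification of the stochastic Fubini step together with the continuity of $Y$ up to (but not at) the origin, both complicated by the non-stationarity of the increments; the logarithmic borderline case $\alpha=1/2$ in part (i) is a secondary technical nuisance.
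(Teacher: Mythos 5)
Your proposal is correct, and its skeleton coincides with the paper's: non-differentiability comes from lower bounds on the increment variance extracted from the middle term $\mathcal C_2$ of the decomposition \eqref{eq: 4}; the differentiable regime $\alpha>1/2$ is handled by a stochastic Fubini identity identifying $X(t)-X(s)$ with $\int_s^t Y(r)\,{\mathrm d}r$ for the candidate derivative $Y$, followed by the fundamental theorem of calculus; and the failure of second differentiability is obtained by running the same lower-bound mechanism on $Y$. The genuine differences are these. (a) Where the paper converts the bound $E[\mathcal C_2^2]\ge c(s)(t-s)^{2\alpha+1}$ into the almost-sure statement by citing Yeh's theorem, you use a self-contained Gaussian Borel--Cantelli argument along a sequence $t_n\downarrow s$; this is more elementary and delivers exactly the pointwise claim \eqref{eqn-diff-1}, whereas Yeh's criterion, when it applies, gives the stronger ``nowhere differentiable'' conclusion. (b) In part (ii) you make explicit the Kolmogorov--Centsov continuity of $Y$ on compact subsets of $(0,\infty)$, with increments of order $(t-s)^{2\alpha-1}$, which the paper leaves implicit although it is needed both for ``continuously differentiable'' and for the FTC step; you are also right that the derivative exists only on $(0,\infty)$, since $\Var(Y(t))\asymp t^{2H-2}$ blows up at the origin.

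The most substantial divergence is the borderline case $\alpha=1/2$, and here your route is not merely an alternative but appears to be necessary. For fixed $s>0$ one has $E[\mathcal C_2^2]=\int_s^t(t-u)u^{-\gamma}\,{\mathrm d}u\sim \tfrac12 s^{-\gamma}(t-s)^2$ as $t\downarrow s$, so no lower bound of the form $a(t-s)^{\beta}$ with $\beta<2$ can hold near the diagonal. The paper claims $\Phi(s,t)\ge c^2(t-s)^{2-\gamma}/((1-\gamma)(2-\gamma))$ via nonnegativity of the auxiliary function $\mathfrak h$, but its own derivative formula shows $\mathfrak h'<0$ on $(0,s_0)$ and $\mathfrak h'>0$ on $(s_0,t)$; with $\mathfrak h(0)=\mathfrak h(t)=0$ the interior critical point $s_0$ is therefore a minimum, so $\mathfrak h\le 0$ and the claimed inequality is reversed (a numerical check with $\gamma=1/2$, $s=0.99$, $t=1$ confirms this). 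The true near-diagonal behaviour at $\alpha=1/2$ is exactly the logarithmic one you extract from $\mathcal C_1$, namely $\Phi(s,t)\asymp(t-s)^2\log(1/(t-s))$, which a power-law criterion such as Yeh's cannot detect, but which your variance-divergence plus Borel--Cantelli argument handles without modification. So your proposal proves the theorem as stated, and in the case $\alpha=1/2$ it repairs a step where the paper's own argument does not go through.
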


\begin{proof}
We firs prove part (i). 
{ We apply Theorem of Yeh \cite{yeh1967differentiability} (see also its correction \cite{yeh1968correction}). It says that if a separable Gaussian process $\xi=\{\xi(t): t\in [0,T]\}$ has mean zero, and satisfies the Kolmogorov's continuity condition, and the lower bound: 
for some $\alpha, a>0$,
$$
E[|\xi(t)- \xi(s)|^2] \ge a |t-s|^\alpha, \quad t,s \in [0,T],
$$
then for $\lambda>\alpha/2$ and for any $t$,  
$$
\lim_{s\to 0} \sup\frac{|\xi( t\pm s) -\xi(t)|}{s^\lambda} =+\infty.
$$
}
For the differentiability of sample paths of (\ref{def-X}) we calculate a lower bound of $\,\Phi (s, t)
$
in (\ref{eqn-Phi}).

When $\, \gamma \in (0, 1)  \,$, $\, 0 < \alpha < 1/2  \,$, it follows from the calculation of H\"older continuity  for $\, 0 < s < t \,$, 
\begin{align*}
\Phi (s, t) 
& \ge c^{2}\int^{t}_{s} (t-u)^{2\alpha} u^{-\gamma} {\mathrm d} u \\
& \ge  c^{2 }\int^{(t+s)/2}_{s} (t-u)^{2\alpha} u^{-\gamma} {\mathrm d} u\\
& \ge \frac{c^{2}}{2 \alpha + 1 } \Big( \frac{\,t+s\,}{\,2\,} \Big)^{-\gamma} \cdot \Big( \frac{\,t-s\,}{\,2\,}\Big)^{2\alpha + 1} \,.  
\end{align*}
Thus, if $\, \alpha < 1/2\,$ with $\, 2 \alpha + 1  < 2\,$, by the Theorem of Yeh \cite{yeh1967differentiability},
the sample paths are almost nowhere differentiable in $\, [0, \infty) \,$.

Similarly, when  $\, \alpha \, =\,  1/2 \,$ and $\, \gamma \in (0, 1) \,$, we may compute directly for $\,0 < s < t < T \,$
\begin{equation*} 
\begin{split}
\Phi (s, t) & \ge c^{2} \int^{t}_{s} (t  - u ) u^{-\gamma} {\mathrm d} u \\
&\,=\, c^2  \bigg( \frac{\,t (t^{1-\gamma} -  s^{1-\gamma}) \,}{\, 1 - \gamma \,} - \frac{\,t^{2-\gamma} -  s^{2-\gamma}\,}{\,2-\gamma\,} \bigg) \\
& \ge \frac{\,c^{2}\,}{\,(1-\gamma) (2-\gamma) \,} \cdot (t - s)^{2 - \gamma} \, , 
\end{split}
\end{equation*}
since for every $\, t < T\,$, the function 
\[
\mathfrak h(s) \, :=\, \frac{\,t (t^{1-\gamma} -  s^{1-\gamma}) \,}{\, 1 - \gamma \,} - \frac{\,t^{2-\gamma} -  s^{2-\gamma}\,}{\,2-\gamma\,} - \frac{\,(t - s)^{2 - \gamma}
\,}{\,(1-\gamma) (2-\gamma) \,} \,, \quad 0 \le s \le t \, 
\]
of $\, s\,$ is nonnegative. Indeed, it has the first derivative 
\[
\mathfrak h^{\prime}(s) \, =\,  \frac{\,t-s\,}{\,1 - \gamma\,} \Big( (t-s)^{-\gamma} - (1 - \gamma) s^{-\gamma}\Big)
\]
and $\,\mathfrak h\,$ has a maximum at $\, s_{0} \, :=\, (1 + ( 1- \gamma)^{-1/\gamma})^{-1}\,  t \,$ with minima at $\,\mathfrak h(0) \, =\, 0 \, =\, \mathfrak h(t)  \,$. 
Thus, if $\, \alpha = 1/2\,$, $\,\gamma \in (0, 1) \,$ with $\, 2 -\gamma < 2\,$, again by the Theorem of Yeh \cite{yeh1967differentiability},
the sample paths are almost nowhere differentiable in $\, [0, \infty) \,$. Therefore, we conclude that the GFBM $\,X\,$ is almost nowhere differentiable in the parameter set $\,\alpha \le 1/2 \,$, $\, \gamma \in (0, 1) \,$.

Next we prove part (ii). 
If $\, \alpha > 1/2\,$, then the process $\, X\,$ 
is a semimartingale of finite variation with the following representation
\[
X(t) \, =\,  c \int^{t}_{0} \Big( \int^{t}_{-\infty} \alpha ( r - s)^{\alpha-1}_{+} \lvert s \rvert^{-\gamma/2} {\mathrm d} B(s) \Big) {\mathrm d} r, \quad t \ge 0 .  
\]
This follows from a stochastic version of Fubini theorem  (Theorem 4.6 of \cite{basse2009spectral}), because for $0 < r < t$, the stochastic integral $$h_{r,t}\, :=\, \int^{t}_{-\infty}\alpha ( r - s)^{\alpha-1}_{+} \lvert s \rvert^{-\gamma/2}{\mathrm d} B(s) 
\, =\, \int^{r}_{-\infty}\alpha ( r - s)^{\alpha-1}  \lvert s \rvert^{-\gamma/2}{\mathrm d} B(s) 
$$ 
is well defined. Indeed, we have   
\begin{align*}
\int^{t}_{-\infty} (r-s)_{+}^{2(\alpha - 1) } \lvert s \rvert^{-\gamma} {\mathrm d}s
& \, =\, \int^{r}_{-\infty} (r-s)_{+}^{2(\alpha - 1) } \lvert s \rvert^{-\gamma} {\mathrm d}s   \\
& \,=\,  \int^{r}_{0}  ( r - s)^{2(\alpha - 1) } s^{-\gamma} {\mathrm d} s + \int^{0}_{-\infty} ( r - s)^{2(\alpha - 1) } \lvert s \rvert^{-\gamma} {\mathrm d} s \\
& \, =\,  r^{2H}  \Big( \int^{1}_{0} (1-u)^{2(\alpha - 1) } u^{-\gamma} {\mathrm d} u + \int^{\infty}_{0} ( 1 + u)^{2(\alpha -1) } u^{-\gamma}  {\mathrm d} u \Big) < \infty \, . 
\end{align*}

 Since $\, (r, t) \mapsto h_{r, t}\, $ does not depend on $\,t\,$, we claim the sample path of $\, X \,$ is differentiable with its derivative $\,{\mathrm d} X(t) / {\mathrm d} t \, =\, c  h_{t, t}\,$, $\, t \ge 0 \,$ almost surely. 
However, $\, t \mapsto h_{t,t}\,$ is not differentiable with probability one, because of a similar reasoning. Indeed, for $s<t$, 
\[
 \int^{t}_{s}(t-u)^{2(\alpha-1)}_{+} u^{-\gamma} {\mathrm d} u  \,  \ge  \, \Big( \frac{\,t+s\,}{\,2\,} \Big)^{-\gamma} \cdot \Big( \frac{\,t-s\,}{\,2\,}\Big)^{2(\alpha-1) + 1} \,,  
 \]
and hence, by applying the result from \cite{yeh1967differentiability} again, we see $\, 2 (\alpha - 1)+1 < 2\,$ or equivalently, $\, \alpha < 3/2 \,$, the sample path of $\,h\,$ is not differentiable with probability one. 

Consequently, the sample paths of $\,X\,$ are continuously differentiable once but not twice almost surely for the fixed parameter $\, \alpha \in (1/2, (1+\gamma)/2)\,$. 
\end{proof}

\section{Functional Law of Iterated Logarithm} \label{sec-FLIL}

In this section we establish the functional  Law of Iterated Logarithm (FLIL) of the GFBM $X$ in \eqref{def-X}. We refer to \cite{{taqqu1977law}} and  \cite{taqqu1985survey} for the FLIL of FBM. 
We apply Theorem A1 in \cite{{taqqu1977law}} to prove the FLIL for the process $X$. We first introduce some notation and terminology. 

Let $\CC[0,T]$ be the space of continuous functions. Recall the covariance function $\Psi$ in \eqref{eqn-Psi}, which is shown to be continuous in \cite{pang-taqqu}. 
Let $\mathcal{H}(\Psi)$ be the reproducing kernel Hilbert space (RKHS) with $\Psi$ as the reproducing kernel. 
It is defined as the completion of the vector space spanned by the functions $\Psi(s, \cdot)$, $s \in [0,T]$, and endowed with the scalar product 
$$
\Big\langle \sum_i c_i \Psi(s_i, \cdot), \sum_j c'_j \Psi(s'_j,\cdot) \Big\rangle = \sum_i \sum_j c_i c'_j \Psi(s_i, s'_j). 
$$
Let $K:= \big\{h\in \mathcal{H}(\Psi): \langle h, h \rangle^{1/2} \le 1\big\}$ be the unit ball of $\mathcal{H}(\Psi)$. 
The FLIL in general states that
\begin{itemize}
\item[(a)] a certain sequence of functions $z_n$ of $\CC[0,T]$ is contained in an $\epsilon$-neighborhood of $K$ when $n$ is large (another way to say this, is that this sequence is relatively compact as $n\to\infty$, namely that $\{z_n\}$ contains a converging subsequence to an element of $K$), and
\item[(b)] the functions that are limiting points of the sequence $\{z_n\}$ fill up the set $K$. 
\end{itemize}
Let $d(\cdot,\cdot)$ be the sup-norm distance in $\CC[0,T]$, and $\CC\{z_n\}$ represents the cluster set {(the set of the limit points)} of the sequence $\{z_n\}$. 

{
The same properties in  \eqref{eqn-FLIL-1} and  \eqref{eqn-FLIL-2} below hold for the FBM $B^H$. They are stated in Corollary A1 in \cite{{taqqu1977law}}, applying Theorem A1 with the reproducing kernel $\Gamma_H(s,t) =  E\bigl[B^H(s) B^H(t)\bigr]$ in \eqref{eq: FBM cov.f}, and $K$ equal to the unit ball of $\mathcal{H}(\Gamma_H)$. 
}

\begin{theorem} \label{thm-FLIL}
Let $K$ be the unit ball of  the RKHS $\mathcal{H}(\Psi)$ with the covariance kernel $\Psi$ in \eqref{eqn-Psi}. 
The GFBM $X$ in \eqref{def-X} satisfies 
\begin{equation} \label{eqn-FLIL-1}
\lim_{n\to\infty} d \left( \frac{X(nt)}{ (2n^{2H} \log \log n)^{1/2}}, K \right) = 0, \quad a.s. 
\end{equation}
and
 \begin{equation} \label{eqn-FLIL-2}
\CC\left\{  \frac{X(nt)}{ (2n^{2H} \log \log n)^{1/2}}\right\} = K, \quad a.s. 
\end{equation}
where $H= \alpha - \gamma/2+1/2$. 
\end{theorem}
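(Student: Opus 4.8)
The plan is to apply Theorem A1 of Taqqu \cite{taqqu1977law} directly, verifying its hypotheses for the GFBM $X$. This is the general functional LIL for self-similar Gaussian processes, and the work is entirely in checking the technical conditions, since the conclusions \eqref{eqn-FLIL-1}--\eqref{eqn-FLIL-2} are exactly what that theorem delivers once its assumptions hold. The hypotheses I expect to verify are: (a) $X$ is a mean-zero Gaussian process with $X(0)=0$ and continuous sample paths (established in \cite{pang-taqqu} and in the preliminaries above); (b) $X$ is $H$-self-similar, i.e. $\{X(nt)\}_{t} \deq \{n^H X(t)\}_t$, which follows from the scaling identity \eqref{X-rep-1}; and (c) a continuity/moduli condition on the increments that guarantees the requisite sample-path regularity and tightness. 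The normalizing sequence $(2n^{2H}\log\log n)^{1/2}$ and the role of the unit ball $K$ of the RKHS $\mathcal{H}(\Psi)$ are precisely the self-similar analogue of the classical Strassen functional LIL, with $n^{2H}$ replacing the $n$ that appears for Brownian motion.

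\textbf{First I would record the self-similarity explicitly.} From \eqref{X-rep-1}, scaling the Brownian measure gives $\{c^{-1}X(nt)\}_{t} \deq \{n^H c^{-1} X(t)\}_t$, so for each fixed $n$ the rescaled process $X(nt)/n^H$ has the same law as $X(t)$; in particular its covariance is again $\Psi(s,t)$. This identifies the correct normalization $n^{2H}$ inside the square root and ties the limit set to the unit ball $K$ of $\mathcal{H}(\Psi)$, exactly as in Corollary A1 of \cite{taqqu1977law} for the FBM with kernel $\Gamma_H$. I would then observe that $\Psi$ is continuous (cited from \cite{pang-taqqu}), so $\mathcal{H}(\Psi)$ is well-defined and its unit ball $K$ is a compact subset of $\CC[0,T]$, which is what makes the cluster-set statement \eqref{eqn-FLIL-2} meaningful.

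\textbf{The main obstacle will be verifying the regularity/incremental hypothesis of Theorem A1}, which for a non-stationary-increment process cannot be read off from a single-variable variance function as it can for the FBM. This is exactly where the H\"older continuity estimate of Theorem \ref{thm-Holder} is indispensable: I would use the bound $\Phi(s,t)=E[(X(t)-X(s))^2]\le C_T|t-s|^{2H}$ to control the Gaussian increments uniformly, supplying both the sample-path equicontinuity needed for relative compactness (part (a) of the FLIL) and the modulus-of-continuity input that Taqqu's criterion requires. For the lower/fill-up direction (part (b)), I would invoke the standard independence-of-increments-over-widely-separated-scales argument underlying Theorem A1, combined with self-similarity, to show every $h\in K$ is a cluster point; the non-stationarity of increments does not obstruct this, since the argument operates through the covariance kernel $\Psi$ and its associated RKHS rather than through stationarity.

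\textbf{Finally I would assemble the pieces}: self-similarity fixes the normalization and limit set, continuity of $\Psi$ makes $K$ compact, and Theorem \ref{thm-Holder} verifies the incremental regularity hypothesis; applying Theorem A1 of \cite{taqqu1977law} then yields \eqref{eqn-FLIL-1} and \eqref{eqn-FLIL-2} simultaneously, with $H=\alpha-\gamma/2+1/2$. The only genuinely GFBM-specific labor is checking that the complicated kernel \eqref{eqn-Psi} satisfies the hypotheses, and this reduces—via the H\"older bound—to the same estimates already carried out in the proof of Theorem \ref{thm-Holder}.
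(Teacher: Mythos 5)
Your overall route is the same as the paper's: both apply Theorem A1 of \cite{taqqu1977law}, verify its covariance-scaling hypothesis (C-1) via the self-similarity identity $\Psi(rs,rt)=r^{2H}\Psi(s,t)$ with slowly varying factor $L\equiv 1$, and verify its increment hypothesis (C-2) using the H\"older bound $\Phi(s,t)\le C_T|t-s|^{2H}$ from Theorem \ref{thm-Holder}. But there is a genuine gap: Theorem A1 has a \emph{third} hypothesis, the decorrelation condition (C-3) (see \eqref{eqn-C3}),
\[
\lim_{n\to\infty,\ m/n\to\infty} E\left[ \frac{X(ms)}{m^{H}}\,\frac{X(nt)}{n^{H}} \right]\,=\,0,
\]
and you never verify it. You instead invoke ``the standard independence-of-increments-over-widely-separated-scales argument underlying Theorem A1'' as if it were internal to the theorem's proof, and assert that non-stationarity ``does not obstruct'' it because everything ``operates through the covariance kernel.'' That gets the logical structure backwards: this decorrelation is a hypothesis \emph{on} the covariance kernel that must be checked for the specific process, and---contrary to your closing claim---it does not reduce to the estimates of Theorem \ref{thm-Holder}, which control increments $E[(X(t)-X(s))^2]$ rather than cross-covariances $E[X(ms)X(nt)]$ at widely separated scales.

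The gap is substantive because (C-3) can fail for processes satisfying everything you do verify. Take $X(t)=t^{H}Z$ with $Z\sim N(0,1)$: it is mean-zero Gaussian, continuous, exactly $H$-self-similar, and satisfies $E[(X(t)-X(s))^2]=(t^{H}-s^{H})^{2}\le |t-s|^{2H}$; yet $E[X(ms)X(ns)]/(m^{H}n^{H})=s^{2H}\not\to 0$, the normalized paths $t^{H}Z/(2\log\log n)^{1/2}$ converge uniformly to $0$ a.s., and the cluster set is the single zero function rather than the unit ball $K$---so \eqref{eqn-FLIL-2} fails. The bulk of the paper's proof is devoted precisely to ruling this out for the GFBM: it writes $\Psi(ns,mt)$ via \eqref{eqn-Psi} as two integrals, changes variables in each, and shows that after dividing by $m^{H}n^{H}$ both terms are bounded by $(n/m)^{(1-\gamma)/2}$ times a finite constant, hence vanish as $m/n\to\infty$. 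This computation is the GFBM-specific labor your proposal omits.
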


\begin{proof}
It is clear that $\Psi(t,t)$ is strictly increasing in $t$. 
We check the three conditions (C-1), (C-2) and (C-3) in Theorem A1 in  \cite{{taqqu1977law}}, that is,
\begin{itemize}
\item[(C-1)] 
\begin{align} \label{eqn-C1}
 \lim_{r\to\infty}\sup_{0\le s, t \le T} \left| \frac{E[X(rs)X(rt)] }{r^{2H}L(r)} - \Psi(s,t) \right| = 0. 
\end{align}
\item[(C-2)]
There is a nonnegative, strictly increasing and continuous function $\phi$ on $\RR_+$ satisfying $\int_1^\infty \phi(e^{-u^2})du <\infty$ such that 
\begin{align}\label{eqn-C2}
E\big[(X(rs) - X(rt))^2\big] \le \phi^2(|s-t|) r^{2H} L(r), \quad s, t \ge 0,\, r \ge 0.
\end{align}
\item[(C-3)]
\begin{align}\label{eqn-C3}
\lim_{n\to\infty, m/n\to\infty} E \left[ \frac{X(ms)}{m^HL^{1/2}(m)}\frac{X(ns)}{n^HL^{1/2}(n)} \right] = 0
\end{align}
\end{itemize}
We take $L(\cdot) \equiv 1$. 

For (C-1), for $r>0$ and $s<t$, we have
\begin{equation} \label{Cov-scale-identity}
E[X(rs)X(rt)] = \Psi(rs, rt)= r ^{2\alpha - \gamma +1}  \Psi(s,t),
\end{equation}
which immediately implies that 
$$
 \sup_{0\le s, t \le T} \left| \frac{E[X(rs)X(rt)] }{r^{2H}} - \Psi(s,t) \right| = 0. 
$$
Thus, \eqref{eqn-C1} holds. 

For (C-2),  as shown in Theorem \ref{thm-Holder} for  the H{\"o}lder continuity property, for $r>0$ and $s<t$, we have
$$
E\big[(X(rs)-X(rt))^2\big]  \le C_T |s-t|^{2H} r^{2H},
$$
for some constant $C_T>0$. 
Let $\phi(t) = C_T t^{2H}$ for $t \in [0,T]$. It is clear that $\phi(\cdot)$ is a nonnegative, strictly increasing and continuous function satisfying $\int_1^\infty \phi(e^{-u^2}) {\mathrm d} u = C_T \int_1^\infty  e^{- 2H u^2} <\infty$ since $H \in (0,1)$.  This verifies the condition (C-2). 

For (C-3), for $s<t$, and for $m>n>0$ satisfying $m/n\to\infty$ (noting that $mt>ns$),  we have
\begin{align} \label{eqn-Psi-nmst}
\Psi(ns, mt) &= c^2  \int_0^{ns} (mt-v)^{\alpha} (ns-v)^{\alpha} v^{-\gamma} \mathrm{d}v \nonumber \\
& \qquad + c^2  \int_0^{\infty} ((mt+u)^{\alpha} - u^{\alpha}) ((ns+u)^{\alpha} - u^{\alpha})u^{-\gamma}\mathrm{d}u. 
\end{align}
The first integral term is equal to
\begin{align}
\int_0^s (mt -nu)^\alpha (ns-nu)^\alpha (nu)^{-\gamma} n \mathrm{d}u
& = \int_0^s m^{\alpha}  \left(t- \frac{n}{m} u \right)^{\alpha} n^{\alpha +1 -\gamma} (s-u)^\alpha u^{-\gamma} \mathrm{d}u. \non
\end{align}
Dividing by $m^{H}n^{H}$, we obtain
\begin{align}
\left(\frac{n}{m}\right)^{(1-\gamma)/2} \int_0^s   \left(t- \frac{n}{m} u \right)^{\alpha}  (s-u)^\alpha u^{-\gamma} \mathrm{d}u \to 0 \non
\end{align}
as $n, m \to \infty$ and $n/m\to 0$. 

The second integral term in \eqref{eqn-Psi-nmst},  we have
\begin{align}
& m^{\alpha} n^{\alpha} \int_0^{\infty} ((t+u/m)^{\alpha} - (u/m)^{\alpha}) ((s+u/n)^{\alpha} - (u/n)^{\alpha})u^{-\gamma}\mathrm{d} u \non\\
&= m^{\alpha} n^{\alpha} \int_0^{\infty} \left[ \left( t+ \frac{n}{m} r\right)^\alpha 
- \left( \frac{n}{m}r\right)^\alpha \right] \left[ (s+r)^{\alpha} - r^{\alpha} \right] n^{-\gamma}r^{-\gamma}n \mathrm{d} r.\non
\end{align}
Dividing by $m^{H}n^{H}$, we obtain
\begin{align}
 \left(\frac{n}{m}\right)^{(1-\gamma)/2} \int_0^{\infty} \left[ \left( t+ \frac{n}{m} r\right)^\alpha 
- \left( \frac{n}{m}r\right)^\alpha \right] \left[ (s+r)^{\alpha} - r^{\alpha} \right] r^{-\gamma}\mathrm{d} r \to 0 \non
\end{align}
as $n, m \to \infty$ and $n/m\to 0$. 

Thus, for $s<t$, 
\begin{align}
\lim_{n\to\infty, m/n\to \infty}E\left[ \frac{X(ns)}{n^H}  \frac{X(mt)}{m^H} \right] & =  \lim_{n\to\infty, m/n\to \infty} \frac{1}{n^H m^H} \Psi(ns, mt) =0. \non
\end{align}
For the case $s>t$, we can switch $s$ and $t$ above in \eqref{eqn-Psi-nmst}, and note that we can let $m>>n$ such that $mt>ns$. Then the same argument will follow.   Therefore we have verified \eqref{eqn-C3} in Condition (C-3).  This completes the proof. 
\end{proof}

\section{Local law of the iterated logarithms} \label{sec-LLIL}

For FBM $B^H$, the local  Law of Iterated Logarithm states that with probability one,
$$
\limsup_{u\to 0^+} \frac{|B^H(ut)|}{u^H\sqrt{\log\log u^{-1}}} = c_H
$$
for all $t \in (0,T]$, as in \cite{arcones1995law}. See the equivalent expression in \eqref{eqn-FBM-LLIL1}. 
For Gaussian processes, Arcones \cite{arcones1995law} has established a useful criterion to prove the local law of the iterated logarithm in Theorem 4.1.  We apply that to prove the following for the GFBM $X$ in \eqref{def-X}. 

\begin{theorem} \label{thm-LLIL}
For the GFBM $X$ in \eqref{def-X}, 
if $\,\alpha > 0 \,$, with probability one, 
$$
\limsup_{u\to 0^+} \frac{|X(u t)|}{u^H\sqrt{\log\log u^{-1}}} 
$$
exists for all $t \in (0,T]$.

\end{theorem}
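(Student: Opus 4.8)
The plan is to apply the general local LIL criterion for Gaussian processes of Arcones \cite{arcones1995law} (Theorem 4.1), after first using the exact self-similarity of $X$ both to reduce the statement to a single claim at the origin and to make the hypotheses of that criterion transparent. Fix $t\in(0,T]$ and put $s=ut$. Writing $\sigma(s):=(\Var X(s))^{1/2}=s^{H}$ by Lemma \ref{lem-const}, and using $\log\log(1/u)\sim\log\log(1/s)$ as $s\to0^{+}$, one has
\[
\frac{|X(ut)|}{u^{H}\sqrt{\log\log u^{-1}}}=t^{H}\,\frac{|X(s)|}{\sigma(s)\sqrt{\log\log s^{-1}}}\,(1+o(1)),\qquad s=ut\to0^{+}.
\]
Hence it suffices to show that
\[
L:=\limsup_{s\to0^{+}}\frac{|X(s)|}{\sigma(s)\sqrt{\log\log s^{-1}}}
\]
is almost surely a finite deterministic constant; on the single almost-sure event where $L$ exists, the limit in the statement then exists and equals $t^{H}L$ for every $t\in(0,T]$ simultaneously.

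The structural fact that makes Arcones' hypotheses hold cleanly is that, by the scaling identity \eqref{Cov-scale-identity}, the normalized covariance is exactly scale-invariant: for $\theta>0$,
$\frac{E[X(s)X(\theta s)]}{\sigma(s)\sigma(\theta s)}=\frac{\Psi(s,\theta s)}{s^{H}(\theta s)^{H}}=\frac{\Psi(1,\theta)}{\theta^{H}}=:\rho(\theta)$,
independent of $s$. Consequently the limiting self-similar object required by Theorem 4.1 of \cite{arcones1995law} is $X$ itself, and its conditions reduce to statements about $\rho$ and $\Phi$. I would verify them as follows. First, the variance regularity is immediate: $\sigma(s)=s^{H}$ is regularly varying of index $H\in(0,1)$ by Lemma \ref{lem-const}. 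Second, the local oscillation condition, $\lim_{\theta\to1}\limsup_{s\to0}E[(X(\theta s)-X(s))^{2}]/\sigma^{2}(s)=0$, follows from self-similarity $E[(X(\theta s)-X(s))^{2}]=s^{2H}\Phi(1,\theta)$ together with the Hölder bound of Theorem \ref{thm-Holder}, which gives $\Phi(1,\theta)\le C|\theta-1|^{2H}\to0$; thus this condition is handed to us by the already-established continuity estimate. Third, the existence of $\rho(\theta)$ is automatic from the scale invariance above, and the decorrelation across widely separated scales needed for the lower half of the LIL amounts to $\rho(\theta)\to0$ as $\theta\to0^{+}$.

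This last point is the small-scale analogue of condition (C-3) in the proof of Theorem \ref{thm-FLIL}. Estimating the two integrals in \eqref{eqn-Psi} with smaller argument $\theta$ and larger argument $1$, the first contributes on the order $\theta^{\alpha-\gamma+1}=\theta^{H+(1-\gamma)/2}=o(\theta^{H})$, while the expansion $(\theta+u)^{\alpha}-u^{\alpha}=O(\theta\,u^{\alpha-1})$ makes the second of order $\theta=o(\theta^{H})$ (using $H<1$); hence $\rho(\theta)=\Psi(1,\theta)/\theta^{H}\to0$. Here the standing hypothesis $\alpha>0$ is what guarantees integrability of the expansion driving the second estimate. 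Once all the conditions are checked, Arcones' criterion yields that $L$ is a.s. a finite constant (a zero--one law), completing the reduction.

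I expect the main obstacle to be precisely item (iii): because the increments are non-stationary, $\rho(\theta)=\Psi(1,\theta)/\theta^{H}$ is not the elementary FBM correlation, so the joint-law control of $X$ at geometrically spaced points $s_{n}=a^{n}$ demanded by Theorem 4.1 must be extracted by hand from the explicit but cumbersome kernel \eqref{eqn-Psi}. The delicate part is the uniform decorrelation estimate across separated scales, which parallels the work for (C-3) but now requires controlling $\Psi(1,\theta)$ as $\theta\to0^{+}$ rather than as a scale ratio tends to infinity, and it is there — rather than in the oscillation bound, which Theorem \ref{thm-Holder} supplies — that the non-stationarity genuinely costs effort and the assumption $\alpha>0$ is used.
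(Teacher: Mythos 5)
Your route is essentially the paper's: both proofs invoke Theorem 4.1 of Arcones \cite{arcones1995law}, use the exact scaling identity \eqref{Cov-scale-identity} to dispose of the covariance-convergence condition (the limiting object is $X$ itself), lean on Theorem \ref{thm-Holder} for the oscillation control, and concentrate the real work in the cross-scale decorrelation condition, which by scale invariance reduces to showing $\Psi(t_j,\theta t_k)/\theta^{H}\to 0$ as $\theta\to 0^{+}$ --- precisely the paper's verification of condition (iii), and the place where $\alpha>0$ enters through $(1-u)^{\alpha}\le 1$ and the H\"older-type bound on $x\mapsto x^{\alpha}$. Your preliminary pathwise reduction to a single $\limsup$ at the origin is correct but unnecessary, since Arcones' theorem already delivers the conclusion for all $t\in(0,T]$ simultaneously.

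There are two concrete gaps. First, your estimate of the second integral in \eqref{eqn-Psi} fails as stated: bounding $(\theta+u)^{\alpha}-u^{\alpha}$ by $O(\theta u^{\alpha-1})$ makes the integrand of order $u^{\alpha-1-\gamma}$ near $u=0$, which is \emph{not} integrable whenever $\alpha\le\gamma$ --- a region allowed under the standing hypothesis (e.g.\ $\alpha$ small, $\gamma$ near $1$). So it is not true that ``$\alpha>0$ guarantees integrability of the expansion''; you must split the integral, using $(\theta+u)^{\alpha}-u^{\alpha}\le\theta^{\alpha}$ on the small-$u$ range and the Taylor bound only for large $u$, which yields the order $\theta^{\min(1,\,1+\alpha-\gamma)}$ (times a logarithm when $\alpha=\gamma$), still $o(\theta^{H})$, so your conclusion survives. (This slip in fact mirrors one in the paper's own proof: in \eqref{eq: Thm6.1-3} the factor with the larger argument is bounded by $(ut)^{\alpha}$, and the remaining unsquared integral $\int_{0}^{\infty}((1+\theta)^{\alpha}-\theta^{\alpha})\theta^{-\gamma}\,{\mathrm d}\theta$ diverges at infinity when $\alpha\ge\gamma$; in both arguments the remedy is the same splitting.) Second, you verify only three of Arcones' hypotheses. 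The paper must also check, and does check, the in-probability negligibility of $\sup_{t\le T}|X(ut)|/\bigl(u^{H}(2\log\log u^{-1})^{1/2}\bigr)$ (condition (iv), handled via Fernique's inequality) and the summability condition (vi) (handled via $\Phi(rs,rt)=r^{2H}\Phi(s,t)$); neither follows merely from regular variation of the variance, so your plan is incomplete as written, although both verifications are routine given self-similarity and continuity.
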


\begin{proof}
We check the nine conditions in  \cite[Theorem 4.1]{arcones1995law}. 
Here we consider the interval $[0,T]$ and use natural pseudometric $\rho(s,t) = \sqrt{E[(X(s) - X(t))^2]} = \sqrt{\Phi(s,t)}$. 
 Let $\tau(u) =u$ and $w(u) = u^{H}$. 
Condition (i) is evident and condition (ix) is clear since $\Phi(s,t)$ is continuous. For (v), $([0,T], \rho)$ is totally bounded, since $\rho(0,T) < \infty$. 
It is clear that the conditions in (vii) and (viii) holds since these functions are continuous. Condition (ix) holds since $\Phi(s,t)$ is continuous. 


For (ii), by the scaling identity of the covariance in \eqref{Cov-scale-identity}, we obtain
$$
E \left[ \frac{X(\tau(u) s) X(\tau(u) t)}{w(u)^2} \right]  = E \left[ \frac{X(u s) X(u t)}{u^{2H}} \right] = \Psi(s,t). 
$$

For (iii), we shall show that for each $m\ge 1$, each $\ep>0$, each $t_1,\dots,t_m\in (0,T]$ and each $\lambda_1, \dots, \lambda_m \in \RR$, 
\begin{align} \label{eqn-LLIL-p-iii}
\lim_{r\to 1-} \limsup_{u \to 0+} \sup_{v: u e^{-(\log u^{-1})^r} \le v \le u e^{-(\log u^{-1})^\ep}} \sum_{j,k=1}^m \lambda_j \lambda_k E \left[ \frac{X(ut_j) X(v t_k)}{w(u)w(v)} \right] \le 0.
\end{align}

For $r > 0 $, let $\varphi(u) :=  u e^{-(\log u^{-1})^r}<u$.  Note that $\varphi(u)/u=e^{-(\log u^{-1})^r} \to 0$ as $u \to 0+$ and for $u > 0 $, $\varphi(u)/u=e^{-(\log u^{-1})^r} \to u  $ as $r \to 1-$. 
Consider for $t>s$, 
\begin{align}
& E \left[ \frac{X(ut) X(\varphi(u)s)}{u^H \varphi(u)^H} \right]  = \frac{1}{u^H \varphi(u)^H} \Psi(ut, \varphi(u) s), \non
\end{align}
where $\Psi(\cdot, \cdot) $ has two terms as in (\ref{eqn-Psi}). 
By the change of variables from $v$ to $\theta  \varphi (u) s$, we have
\begin{align} \label{eq: Thm6.1-2}
&  \frac{1}{u^H \varphi(u)^H}\int_0^{\varphi(u)s} (ut-v)^{\alpha} (\varphi(u)s-v)^{\alpha} v^{-\gamma} {\mathrm d} v \\ 
& \le  \frac{1}{u^H \varphi(u)^H}\int_0^{\varphi(u)s} (ut)^{\alpha} (\varphi(u)s-v)^{\alpha} v^{-\gamma} {\mathrm d}v \non\\
& =  \frac{(ut)^\alpha (\varphi(u)s)^{\alpha -\gamma +1}} {u^H \varphi(u)^H} \int_0^1 (1-\theta)^{\alpha}\theta ^{-\gamma} {\mathrm d} \theta \non\\
&= (ts)^{H}\left( \frac{\,s\,}{\,t\,}  \cdot \frac{\varphi(u)}{u}\right)^{(1-\gamma)/2} \cdot \text{Beta} ( \alpha + 1 , 1 - \gamma)  \to 0 \qasq u \to 0+. \non
\end{align}
This corresponds to the first term of $\Psi(\cdot, \cdot) $ in (\ref{eqn-Psi}). Similarly, by $\alpha$-H\"older continuity of function $\,x \to x^{\alpha}\,$, $\,\alpha > 0 \,$, we have $(u t+v)^{\alpha} - v^{\alpha} \le (ut)^{\alpha}$ and hence, again by the change of variables, we have 
\begin{align} \label{eq: Thm6.1-3} 
&  \frac{1}{u^H \varphi(u)^H} \int_0^{\infty} ((u t+v)^{\alpha} - v^{\alpha}) ((\varphi(u) s+v)^{\alpha} - v^{\alpha})v^{-\gamma} {\mathrm d} v \\
&\le  \frac{1}{u^H \varphi(u)^H} \int^{\infty}_{0} (ut)^{\alpha} (\varphi(u) s)^{\alpha - \gamma + 1} ( ( 1 + \theta )^{\alpha} - \theta^{\alpha} ) \theta^{-\gamma}{\mathrm d} \theta \non \\
&= (ts)^{H}\left( \frac{\,s\,}{\,t\,}  \cdot \frac{\varphi(u)}{u}\right)^{(1-\gamma)/2} \cdot  \int^{\infty}_{0}  ( ( 1 + \theta )^{\alpha} - \theta^{\alpha} ) \theta^{-\gamma}{\mathrm d} \theta \to 0 \qasq u \to 0+. \non  
\end{align}
where the integral is finite as in \eqref{eq:integrability0}. This corresponds to the second term of $\Psi(\cdot, \cdot) $ in (\ref{eqn-Psi}).
Thus, we have shown that condition (iii), i.e.,  \eqref{eqn-LLIL-p-iii} holds with equality to zero. 

For (iv), we show that in probability, 
$$
\sup_{t \in (0,T]} \frac{|X(ut)|}{u^H (2 \log\log u^{-1})^{1/2}} \to 0 \quad \text{as}\quad  u \to 0+. 
$$
By the Fernique inequality (see, e.g., \cite{berman1985asymptotic}), we obtain for each $u$,  there exists 
$q>0$ such that 
\begin{align}
&P \left( \sup_{t \in (0,T]} \frac{|X(ut)|}{u^H (2 \log\log u^{-1})^{1/2}} > \ep \right) 
\non\\
& \le C  P\left(Z>\frac{\ep}{\sup_{t \in (0,T]} (u^H (2 \log\log u^{-1})^{1/2})^{-1}\Psi(ut,ut)^{1/2} + q} \right) \non\\
&= C  P\left(Z>\frac{\ep}{ (2 \log\log u^{-1})^{1/2})^{-1}\sup_{t \in (0,T]} ( \Psi(t,t)^{1/2} + q} \right) \non\\
& \to 0 \qasq u \to 0+, \non
\end{align}
 for all $\ep\ge \ep_0$ for some $\ep_0>0$ and some constant $C>0$, and $Z\sim N(0,1)$.  This proves that condition (iv) holds.

For (vi), we show that for each $\eta>0$, there exists a $\delta>0$ such that 
$$
\limsup_{\theta \to 1-} \sum_{n=1}^\infty \exp\left( \frac{-\eta (\theta^n)^{2H} \log n}{ \sup_{s,t \in[0,T], \Phi(s,t)\le \delta^2}  \Phi(\theta^n s,\theta^n t) }  \right) <\infty.
$$
Observe that, similar to \eqref{Cov-scale-identity},  for $r>0$,
$$
\Phi(rs,rt)= r^{2H} \Phi(s,t). 
$$
Thus, it becomes to show that for each $\eta>0$, there exists a $\delta>0$ such that 
$$
 \sum_{n=1}^\infty \exp\left( \frac{-\eta  \log n}{  \delta^2 }  \right)  = \sum_{n=1}^\infty  n^{-\eta/\delta^2} <\infty,
$$
which holds for any $\delta < \sqrt{\eta}$. 
This completes the proof. 
\end{proof}

\subsection{Compositions} \

We consider the composition $X(\lvert X(\cdot)\rvert) $ of $X(\cdot)$ itself. In Example 4.1 of \cite{arcones1995law}, by applying its Corollary 4.2, 
it is shown that for FBM $B^H$ with $1/2\le  H <1$,  given $b>0$, with probability one,
$$
\left\{\frac{B^H(|B^H(ub)|)}{ u^{H^2} (2 \log \log u^{-1})^{(H+1)/2}} \right\}
$$
is relatively compact as $u\to 0+$ and its limit set is $[-\sigma, \sigma]$ where 
$\sigma = b^{H^2} H^{H/2} (H+1)^{-(H+1)/2}$. 

We apply \cite[Corollary 4.2]{arcones1995law} to the GFBM $X$ in \eqref{def-X}.  We remark that the following result requires $\alpha>0$, since we need to use the $\alpha$-H{\"o}lder continuity of the function $t\to t^\alpha$ in the proof. 
Note that when $\gamma =0$, the condition $\alpha>0$ is equivalent to $H>1/2$ in the case of FBM $B^H$. 
However, for the GFBM $X$, as shown in Figure~\ref{fig-alpha-gamma},  in the region of $\alpha>0$ and $\gamma \in (0,1)$, the range of the Hurst parameter $H$ can cover the entire interval $H \in (0,1)$ (observing that when the $\alpha \approx 0$ and $\gamma \approx 1$, $H$ can be very close to 0). 

\begin{theorem} \label{thm-LLIL-composition}
If $\,\alpha > 0 \,$, with probability one, the set 
$$
\left\{\frac{X(|X(ub)|)}{ u^{H^2} (2 \log \log u^{-1})^{(H+1)/2}} , \, \, u > 0 \right\}
$$
is relatively compact, as $u\to 0+$, and its limit set is $[-\sigma, \sigma]$ where 
$$
\sigma = \sup_{0 \le r \le  \sqrt{\Psi(b,b)}}  \sqrt{\Psi(r,r)} (1-r^2/ \Psi(b,b))^{1/2}, \quad a.s.
$$
\end{theorem}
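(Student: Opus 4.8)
The plan is to apply \cite[Corollary 4.2]{arcones1995law}, following the template of Example 4.1 there, which treats the self-composition of the FBM. That corollary yields the local law of the iterated logarithm for the self-composition of a self-similar centered Gaussian process, once the process is known to satisfy the hypotheses of \cite[Theorem 4.1]{arcones1995law} together with the exact self-similar scaling of its covariance. So the whole argument is a reduction to checking those hypotheses and then reading off the constant $\sigma$.

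The first step is to note that the nine conditions (i)--(ix) of \cite[Theorem 4.1]{arcones1995law} were already verified for the GFBM $X$ in the proof of Theorem \ref{thm-LLIL}, under the standing hypothesis $\alpha > 0$. That hypothesis enters precisely through the $\alpha$-H\"older continuity of $x \mapsto x^\alpha$, which is what makes the cross-covariance estimates \eqref{eq: Thm6.1-2}--\eqref{eq: Thm6.1-3} in conditions (iii) and (iv) go through; this is also why the present composition result cannot dispense with $\alpha > 0$. The second structural input is the exact scaling $\Psi(rs, rt) = r^{2H}\Psi(s,t)$ from \eqref{Cov-scale-identity}, together with the normalization $\Var(X(t)) = \Psi(t,t) = t^{2H}$ of Lemma \ref{lem-const}. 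With these in hand, Corollary 4.2 applies and delivers both the relative compactness of the indexed family and the identification of its cluster set as a symmetric interval $[-\sigma,\sigma]$.

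The second step is to read off $\sigma$. The mechanism is an iterated LIL governed by the reproducing-kernel Hilbert space of $X$. By the local LIL for $X$ applied to the inner evaluation, along suitable sequences $u \to 0+$ the quantity $|X(ub)|$ attains any value of the form $r\, u^H (2\log\log u^{-1})^{1/2}$ with $0 \le r \le \sqrt{\Psi(b,b)}$; in the RKHS unit-ball budget of the functional LIL, producing such an inner value consumes the fraction $r^2/\Psi(b,b)$, leaving $1 - r^2/\Psi(b,b)$ for the outer evaluation. Evaluating $X$ at the far smaller time $\tau \approx r\, u^H(2\log\log u^{-1})^{1/2}$ and using self-similarity together with $\log\log \tau^{-1} \sim \log\log u^{-1}$ produces the factor $\sqrt{\Psi(r,r)} = r^H$ times the normalizing scale $u^{H^2}(2\log\log u^{-1})^{(H+1)/2}$, the remaining budget entering as $(1-r^2/\Psi(b,b))^{1/2}$. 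Optimizing over $r$ gives $\sigma = \sup_{0 \le r \le \sqrt{\Psi(b,b)}} \sqrt{\Psi(r,r)}(1-r^2/\Psi(b,b))^{1/2}$, and since $r \mapsto \sqrt{\Psi(r,r)}(1-r^2/\Psi(b,b))^{1/2}$ is continuous and vanishes at both endpoints, the cluster set is the full interval $[-\sigma,\sigma]$. As a check, $r = b^H x$ reduces the supremum to $b^{H^2}\sup_{0 \le x \le 1} x^H(1-x^2)^{1/2} = b^{H^2}H^{H/2}(1+H)^{-(H+1)/2}$, recovering the FBM constant of Example 4.1.

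The main obstacle is to confirm that the hypotheses Corollary 4.2 imposes on the joint two-parameter field $(X(s), X(t))$ near the diagonal and near the origin really do hold for $X$, despite its non-stationary increments. Unlike FBM, the cross-covariance $\Psi(ut, \varphi(u)s)$ coupling the outer time scale $u$ to the far smaller inner scale $\varphi(u)$ has no closed form and must be controlled directly; the bounds already obtained in \eqref{eq: Thm6.1-2}--\eqref{eq: Thm6.1-3}, resting on the integrability estimate \eqref{eq:integrability0}, are exactly what is needed, and again it is the restriction $\alpha > 0$ that keeps them finite. Once this joint control is in place, the relative compactness and the identification of the limit set $[-\sigma,\sigma]$ follow from \cite[Corollary 4.2]{arcones1995law} verbatim.
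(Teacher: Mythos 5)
Your overall route---reducing the theorem to \cite[Corollary 4.2]{arcones1995law} and identifying $\sigma$ as in Example 4.1 there---is the same as the paper's. However, there is a genuine gap in how you verify the hypotheses of that corollary. You assert that its hypotheses amount to the nine conditions of \cite[Theorem 4.1]{arcones1995law} (already checked in the proof of Theorem \ref{thm-LLIL}) plus the exact scaling \eqref{Cov-scale-identity}, and that the cross-covariance bounds \eqref{eq: Thm6.1-2}--\eqref{eq: Thm6.1-3} are ``exactly what is needed.'' They are not. Corollary 4.2 comes with its own list of four conditions, and the one carrying the real content for the composition is the decorrelation condition
\begin{equation*}
\lim_{u \to 0+} u^{-H}\, E\bigl[X(s)\,X(ut)\bigr] \,=\, 0 \qquad \text{for each fixed } s, t \ge 0,
\end{equation*}
in which one time argument stays \emph{fixed} at a macroscopic value while only the other shrinks, and the normalization is $u^{-H}$ alone. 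The bounds \eqref{eq: Thm6.1-2}--\eqref{eq: Thm6.1-3} control a different quantity, namely $\Psi(ut_j, vt_k)/(u^H v^H)$ with \emph{both} arguments tending to $0$ (at scales $u$ and $v \ll u$); they do not cover the mixed regime above, so pointing to them leaves the key hypothesis unverified.

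The missing verification is not difficult, but it must be done and it is where the structure of $\Psi$ in \eqref{eqn-Psi} and the restriction on the parameters actually enter: split $\Psi(s, ut)$ into its two integral terms; bound the first, after a change of variables and using $(s/u - v)^{\alpha} \le (s/u)^{\alpha}$, by a constant times $u^{H-\alpha} = u^{(1-\gamma)/2} \to 0$ (this uses $\gamma < 1$); and bound the second, using $(s+v)^{\alpha} - v^{\alpha} \le s^{\alpha}$ (this is where $0 < \alpha \le 1$ enters) together with the integrability estimate \eqref{eq:integrability0}, again by a constant times $u^{(1-\gamma)/2}$. You should also record, however briefly, the remaining conditions of the corollary: a.s.\ boundedness of $\sup_{0 \le t \le T} |X(t)|$ (immediate from the continuity established in Theorem \ref{thm-Holder}) and $\lim_{u \to 1-} E[X(ut)X(t)] = E[X(t)^2]$ (immediate from continuity of $\Psi$). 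Your heuristic RKHS-budget derivation of $\sigma$ and the consistency check against the FBM constant of Example 4.1 are fine as commentary, since the corollary itself supplies the value of $\sigma$; the gap lies solely in the hypothesis verification.
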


\begin{proof}
We verify the four conditions (i)-(iv) in \cite[Corollary 4.2]{arcones1995law}. 
Condition (i) requires $E[X(ut) X(us)] = u^{2H} E[X(t)X(s)]$, which holds by \eqref{Cov-scale-identity}. Condition (ii) requires that $\sup_{0 \le t \le T}|X(t)|<\infty$ a.s. 
Condition (iii) requires that
$$
\lim_{u\to1-} E[X(ut) X(t) ] = E\big[X(t)^2\big] \quad \text{for each} \quad t\ge 0.
$$
This follows from the continuity of $\Psi(s,t)$ in \eqref{eqn-Psi}. 

For Condition (iv), we show for each $s,t \ge 0$, 
$$
\lim_{u \to 0+} u^{-H} E[X(s) X(ut)] =0. 
$$
By \eqref{eqn-Psi}, for $u$ small enough such that $s>ut$, we have
\begin{align}
u^{-H} E[X(s) X(ut)] &= c^2 u^{-H}  \int_0^{ut} (s-v)^{\alpha} (ut-v)^{\alpha} v^{-\gamma} {\rm d}v  \non\\
& \qquad + c^2 u^{-H}  \int_0^{\infty} ((s+v)^{\alpha} - v^{\alpha}) ((ut+v)^{\alpha} - v^{\alpha})v^{-\gamma}{\rm d}v. \non
\end{align} 
For the first term, by change of variables, it is equal to 
\begin{align*}
c^2 u^{H} \int_0^{t} (s/u-v)^{\alpha} (t-v)^{\alpha} v^{-\gamma}{\rm d}v
&\le  c^2 u^{H} \int_0^{t} (s/u)^{\alpha} (t-v)^{\alpha} v^{-\gamma} {\rm d}v  \\
& =  c^2 u^{H-\alpha} \int_0^{t} s^{\alpha} (t-v)^{\alpha} v^{-\gamma} {\rm d}v \to 0 \qasq u \to 0+
\end{align*}
since $H- \alpha = 1/2- \gamma/2>0$. 
For the second term, 
\begin{align*}
 & u^{-H}  \int_0^{\infty} ((s+v)^{\alpha} - v^{\alpha}) ((ut+v)^{\alpha} - v^{\alpha})v^{-\gamma}{\rm d}v \\
 & \le u^{-H} \int_0^\infty s^\alpha ((ut+v)^{\alpha} - v^{\alpha})v^{-\gamma}{\rm d}v \\
 & = u^{-H} \int_0^\infty s^\alpha (ut)^{\alpha-\gamma+1}  ((1+\theta)^\alpha - \theta^\alpha) \theta^{-\gamma} {\rm d}\theta \\
 &= u^{(1-\gamma)/2} s^\alpha t^{\alpha-\gamma+1} \int_0^\infty ((1+\theta)^\alpha - \theta^\alpha) \theta^{-\gamma} {\rm d}\theta \\
 & \to 0 \qasq u \to 0+,
\end{align*}
where the integral is finite as in \eqref{eq:integrability0}. Thus we have verified Condition (iv). This completes the proof. 
\end{proof}

\section*{Acknowledgements}
Authors were thankful to the reviewers and associate editor for their careful reading and suggestions on  various improvements of the paper. In particular, the associate editor pointed out a gap in section 4 in the paper, which led to a significant improvement of our understanding of the process.  
Tomoyuki Ichiba was supported in part by NSF grants DMS-1615229 and DMS-2008427. 
Guodong Pang was supported in part by CMMI-1635410,  DMS/CMMI-1715875 and Army Research Office through grant W911NF-17-1-0019. 
Murad S. Taqqu was supported in part by a Simons Foundation grant 569118 at Boston University.


\bibliographystyle{plain}

\bibliography{GFBM}

\end{document}